\numberwithin{equation}{section}
\newtheorem{theorem}{Theorem}[section]
\newtheorem{corollary}[theorem]{Corollary}
\newtheorem{proposition}[theorem]{Proposition}
\newtheorem{lemma}[theorem]{Lemma}
\theoremstyle{definition}
\newtheorem{definition}[theorem]{Definition}
\newtheorem{example}[theorem]{Example}
\newtheorem{remark}[theorem]{Remark}
\newcommand{\Q}{\mathbb{Q}}
\newcommand{\C}{\mathbb{C}}
\newcommand{\PP}{\mathbb{P}}
\def\CC{\mathbb{C}}
\def\QQ{\mathbb{Q}}
\def\RR{\mathbb{R}}
\def\ZZ{\mathbb{Z}}
\def\sO{{\mathscr O}}
\newcommand{\cal}{\mathcal}
\def\cF{{\cal F}}
\def\cH{{\cal H}}
\def\cK{{\cal K}}
\def\cP{{\cal P}}
\def\cV{{\cal V}}
\def\tY{{\widetilde{Y}}}
\def\hbar{\overline{h}}
\DeclareMathOperator{\Hom}{Hom}
\def\Ind{\mathrm{Ind} }
\def\dim{\mathrm{dim} }
\def\ch{\mathrm{ch} }
\def\and{\quad{\rm and}\quad}
\def\lra{\longrightarrow }
\def\beq{\begin{equation}}
\def\eeq{\end{equation}}
\def\ben{\begin{enumerate}}
\def\een{\end{enumerate}}
\def\and{\quad\text{and}\quad}
\def\a{\alpha}
\def\b{\beta}
\def\e{\epsilon}
\def\w{\omega}
\def\Fl{\mathrm{Fl}}
\def\Fl{\mathrm{Fl}}
\def\E{\mathrm{E}}
\def\V{\mathrm{V}}
\def\G{\Gamma}
\def\csf{\mathsf{csf}}
\def\asc{\mathsf{asc}}
\def\LLT{\mathrm{LLT}}
\def\diag{\mathrm{diag}}
\def\pr{\mathrm{pr}}
\def\res{\mathsf{res}}
\newcommand{\N}{\mathbb{N}}
\def\bh{\mathbf{h}}
\title[Geometry of twins of Hessenberg varieties]{Geometry of the twin manifolds of regular semisimple Hessenberg varieties and unicellular LLT polynomials}
\date{November 29, 2023}
\author{Young-Hoon Kiem}
\address{School of Mathematics, Korea Institute for Advanced Study, 85 Hoegiro, Dongdaemun-gu, Seoul 02455, Korea}
\email{kiem@kias.re.kr}
\author{Donggun Lee}
\address{Center for Complex Geometry, Institute for Basic Science, 55 Expo-ro, Yuseong-gu, Daejeon 34126, Korea}
\email{dglee@ibs.re.kr}
\thanks{YHK was partially supported by Korea NRF grant 2021R1F1A1046556. DL was supported by the Institute for Basic Science (IBS-R032-D1).}
\keywords{Hessenberg varieties, twin manifolds, unicellular LLT polynomials, Shareshian-Wachs conjecture, modular law}
\begin{document}

\begin{abstract}
Recently, Masuda-Sato and Precup-Sommers independently proved an LLT version of the Shareshian-Wachs conjecture which says that the Frobenius characteristics of the cohomology of the twin manifolds of regular semisimple Hessenberg varieties are unicellular LLT polynomials. 
The purpose of this paper is to study the geometry of twin manifolds and we
prove that they are related by explicit blowups and fiber bundle maps.
Upon taking their cohomology, we obtain a direct proof of the modular law which establishes the LLT Shareshian-Wachs conjecture.
\end{abstract}

\maketitle

\section{Introduction}
LLT polynomials are symmetric functions that serve as $q$-deformations of the product of Schur functions, introduced by Lascoux, Leclerc, and Thibon \cite{LLT} in their study of quantum affine algebras.
A specific class of these polynomials known as unicellular LLT polynomials (Definition \ref{n2}) was explored in \cite{CM} using Dyck paths, or Hessenberg functions, in parallel with the chromatic quasisymmetric functions. 
The purpose of this paper is to investigate on the geometry of the twin manifolds of regular semisimple Hessenberg varieties (Propositions \ref{38} and \ref{39}) and provide a direct geometric proof of the fact that  
unicellular LLT polynomials are the Frobenius characteristics of representations of symmetric groups $S_n$ on the cohomology of the twin manifolds (Theorem \ref{48}).

\medskip

Hessenberg varieties are subvarieties of flag varieties with interesting properties in geometric, representation theoretic and combinatorial aspects (cf.~\cite{DMPS, AH}). 
One of their notable features is the $S_n$-action on their cohomology \cite{Tym}, where the induced graded $S_n$-representations are equivalent to the purely combinatorially defined symmetric functions known as the chromatic quasisymmetric functions \cite{Sta, SW} of the associated indifference graphs. 
This equivalence (cf.~\eqref{57}), known as the Shareshian-Wachs conjecture \cite{SW}, proved in \cite{BC, GP2}, translates the longstanding conjecture by Stanley and Stembridge \cite{SS} on $e$-positivity of the chromatic (quasi)symmetric functions into a positivity problem on the $S_n$-representations on the cohomology of Hessenberg varieties.

A natural question arises whether there exist geometric objects that encode unicellular LLT polynomials through their cohomology, as in the Shareshian-Wachs conjecture. 
Recently, an answer was found by Masuda-Sato and Precup-Sommers in \cite{MS,PS} 
where they proved
that unicellular LLT polynomials are the Frobenius characteristics of the cohomology of the \emph{twin manifolds} of regular semisimple Hessenberg varieties. 

\medskip

The unitary group $U(n)$ is acted on by its maximal torus $T=U(1)^n$ by left and right multiplications. So we have the quotient maps 
$$Y:= T\backslash U(n) \xleftarrow{p_1}U(n)\xrightarrow{p_2}U(n)/T\cong \Fl(n)=:X$$
where $X$ denotes the flag variety $\Fl(n)$ and $Y$ denotes the isospectral manifold of Hermitian matrices with a fixed spectrum (cf.~\S\ref{n46}).  
The twin manifold of a Hessenberg variety $X_h\subset \Fl(n)=X$ is now defined in \cite{AB} as the submanifold $$Y_h:=p_1(p_2^{-1}(X_h))$$ of the isospectral manifold $Y$. 
These twin manifolds $Y_h$, which are the spaces of staircase Hermitian matrices with a fixed given spectrum, are interesting compact orientable smooth real algebraic varieties.
They generalize the space of tridiagonal matrices of a given spectrum \cite{Tom,BFR,DJ} and 
we have natural isomorphisms 
\beq\label{n47}
H^*_T(Y_h)\cong H^*_{T\times T}( p_1^{-1}(X_h))\cong H^*_T(X_h)\eeq 
which induce an $S_n$-action on the cohomology $H^*(Y_h)$ from that on $H^*(X_h)$ in \cite{Tym}.
The LLT analogue of the Shareshian-Wachs conjecture (\emph{LLT-SW conjecture}, for short) tells us that the Frobenius characteristics of $H^*(Y_h)$ are the unicellular LLT polynomials (cf.~Theorem \ref{48}). 
The known proofs in \cite{MS,PS} are rather indirect and use only  the Hessenberg varieties without looking into the geometry of twin manifolds themselves.
See Remark~\ref{56} for more details. 
Therefore, it seems natural to ask for a direct approach through the geometry of twin manifolds.  
  
\medskip

The \emph{modular law} (cf.~ Definition \ref{n42}), introduced in \cite{AN, GP} for chromatic quasisymmetric functions and in \cite{Ale, Lee} for unicellular LLT polynomials, is a significant relation involving specific triples of these functions. It serves as a symmetric function analogue to the well known deletion-contraction relation of chromatic polynomials.
In fact, Abreu and Nigro proved in \cite{AN} that together with an initial condition (for the case of $h(i)=n$ for all $i$) and the multiplicativity (cf.~\eqref{n3}), 
the modular law completely determines the chromatic quasisymmetric functions and unicellular LLT polynomials. 

In \cite{KL}, the authors investigated on the geometry of Hessenberg varieties $X_h$ and proved that 
the Hessenberg varieties $X_{h_-}$, $X_{h}$ and $X_{h_+}$ for a modular triple $\bh=(h_-,h,h_+)$ (cf.~Definition \ref{59}) are related by explicit blowups and projective bundle maps. 
By applying the blowup formula and projective bundle formula, we then immediately obtain
the modular law for the cohomology of $X_h$, which provides us with an elementary proof of the Shareshian-Wachs conjecture. 

\medskip

In this paper, we investigate on the geometry of the twin manifolds $Y_h$. 
The key for our comparison of twin manifolds is the \emph{roof manifold} $\tY_\bh$ defined in Definition \ref{43}. For a modular triple $\bh=(h_-,h,h_+)$ (cf.~Definition \ref{59}), we construct maps
$$\xymatrix{&\tY_\bh \ar[ld]_-{\pi}\ar[rd]^-{\pr_2}& \\ Y_{h_+}&&\PP^1}$$
where $\pr_2$ is a smooth fibration over the complex projective line $\PP^1$ with fiber $Y_h$ (Proposition \ref{38}) and $\pi$ is the blowup along the submanifold $Y_{h_-}$ of complex codimension 2 (Proposition \ref{39}). 
We define an $S_n$-action on the cohomology $H^*(\tY_\bh)$ and show that 
the induced maps on cohomology by $\pi$ and $\pr_2$ are $S_n$-equivariant. 
We thus obtain  $S_n$-equivariant isomorphisms
\[H^*(Y_h)\oplus H^{*-2}(Y_h)\cong H^*(\tY_\bh)\cong 
H^*(Y_{h_+})\oplus H^{*-2}(Y_{h_-}).\]
Upon taking the Frobenius characteristic, we have the modular law for $H^*(Y_h)$ and hence the LLT-SW conjecture
$$\sum_{k\ge 0}\ch (H^{2k}(Y_h)) q^k = \LLT_h(q)$$ 
where $\LLT_h(q)$ denotes the unicellular LLT polynomial associated to $h$.

\medskip

The layout of this paper is as follows. 
In \S\ref{4}, we review the definition of unicelullar LLT polynomials and their characterization by the modular law.
In \S\ref{14}, we review the results in \cite{AB} on twin manifolds including the $S_n$-action defined on their cohomology. 
In \S\ref{3}, we study the geometry of twin manifolds of triples and in \S\ref{S4}, we establish the modular law for $S_n$-representations on their cohomology.

\medskip

All cohomology groups in this paper have rational coefficients. By $\PP^r$, we denote the complex projective space of one dimensional subspaces in $\CC^{r+1}$.

\medskip

\textbf{Acknowledgement.} We thank Anton Ayzenberg, Jaehyun Hong, Antonio Nigro and Takashi Sato for enlightening  discussions and comments.
 
\bigskip

\section{Unicellular LLT polynomials}\label{4}

LLT polynomials are symmetric functions introduced by Lascoux, Leclerc, and Thibon \cite{LLT} as $q$-deformations of the product of Schur functions in their study of quantum affine algebras.  In the case of unicellular LLT polynomials, which form a subfamily of these symmetric functions, a more convenient model is presented in \cite{CM} using Hessenberg functions. This model represents unicellular LLT polynomials as symmetric functions that encode colorings of graphs, which may not be proper.

In this section, we recall the definition of unicellular LLT polynomials in terms of Hessenberg functions from \cite{CM} and the characterization by the modular law from \cite{AN2}. 
\subsection{Definitions}\label{n1}
Unicellular LLT polynomials can be defined as follows. 
\begin{definition}
	Let $[n]:=\{1,\cdots, n\}$ for an integer $n\geq1$.
	\begin{enumerate}
		\item A \emph{Hessenberg function} is a nondecreasing function $h:[n]\to[n]$ satisfying $h(i)\geq i $ for all $i$
		\item The  \emph{indifference graph} $\G_h$ associated to $h$ is the graph whose  set of vertices is $V(\G_h)=[n]$ and whose set of edges is
			\[\E(\G_h)=\{(i,j)\in [n]\times [n]:i<j\leq h(i)\}.\]
	\end{enumerate}
\end{definition}

Every unicellular LLT polynomial can be written as a symmetric function which encodes vertex-colorings of the indifference graph $\G_h$, similar to the definition of the chromatic quasisymmetric function \cite{Sta,SW}.

A map $\gamma:\V(\G_h)\to \N$ is said to be a (vertex-)coloring of $\G_h$, and it is said to be \emph{proper} if and only if $\gamma(i)\neq \gamma(j)$ whenever $(i,j)\in \E(\G_h)$, where $\N$ is the set of colors indexed by  positive integers.

\begin{definition}\label{n2} \cite{CM, Sta, SW}
	Let $h:[n]\to [n]$ be a Hessenberg function. Let $\Lambda$ be the ring of symmetric functions in variables  $x_1,x_2,\cdots$.

	\begin{enumerate}
	\item The \emph{unicellular LLT polynomial} associated to $h$ is 
	\beq\label{n12}
	\LLT_h(q):=\sum_{\substack{\gamma:\V(\G_h)\to \N }}q^{\asc_h(\gamma)}x_{\gamma(1)}\cdots x_{\gamma(n)} \ \ \in \Lambda[q]\eeq
	where $\gamma$ runs over all colorings which are not necessarily proper and 
	\[\asc_h(\gamma):=\lvert\{(i,j)\in \E(\G_h):\gamma(i)<\gamma(j)\}\rvert.\]
	\item  The \emph{chromatic quasisymmetric function} associated to $h$ is 
	\beq\label{n11} 
	\csf_h(q):=\sum_{\substack{\gamma:\V(\G_h)\to \N \\ \text{proper}}}q^{\asc_h(\gamma)}x_{\gamma(1)}\cdots x_{\gamma(n)}\ \ \in \Lambda[q]\eeq
	where $\gamma$ runs over all \emph{proper} colorings.
	\end{enumerate}
\end{definition}

\subsection{Modular law and characterization of $\LLT_{(-)}$}

The symmetric functions $\LLT_h$ and $\csf_h$ satisfy a linear relation called the \emph{modular law}.
\begin{definition}[Modular triple] \label{59} Let $h_-,h,h_+:[n]\to[n]$ be Hessenberg functions.
	The triple $(h_-,h,h_+)$ is called a \emph{modular triple} if it satisfies one of the following. 
		\begin{enumerate}
			\item If $h(j)=h(j+1)$ and $h^{-1}(j)=\{j_0\}$ for some $1\leq j_0<j<n$, then $h_-$ and $h_+$ are defined by 
			\[h_-(i)=\begin{cases}
				j-1 & \text{for }i=j_0\\
				h(i) &\text{otherwise}
				\end{cases}
			\and h_+(i)=\begin{cases}
				j+1 &\text{for }i=j_0\\
				h(i) & \text{otherwise.}
				\end{cases} 
			\]
			\item If $h(j)+1=h(j+1)\neq j+1$ and $h^{-1}(j)=\emptyset$ for some $1\leq j<n$, then $h_-$ and $h_+$ are defined by 
			\[h_-(i)=\begin{cases}
				h(j) &\text{for }i=j+1\\
				h(i) &\text{otherwise}
				\end{cases}  
			\and h_+(i)=\begin{cases}
				h(j)+1 &\text{for }i= j\\
				h(i) &\text{otherwise.}
				\end{cases}
			\]
		\end{enumerate}
\end{definition}

The two conditions (1) and (2) in Definition \ref{59} are actually dual to each other. See Remark \ref{r1}. 

\begin{definition}[Modular law] \label{n42}
Let $F$ be a function from the set of Hessenberg functions to $\Lambda[q]$. We say that $F$ satisfies the \emph{modular law} if
	\beq \label{5} (1+q)F(h)=F(h_+)+qF(h_-)\eeq
	for every modular triple $(h_-,h,h_+)$. 
\end{definition}

Note that unicellular LLT polynomials and chromatic quasisymmetric functions can be viewed as  functions $\LLT_{(-)}$ and $\csf_{(-)}$ from the set of Hessenberg functions to $\Lambda[q]$.
\begin{proposition} 
\cite{AN, Ale, Lee} 
Unicellular LLT polynomials and chromatic quasisymmetric functions satisfy the modular law. 
\end{proposition}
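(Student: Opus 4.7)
The plan is to give a direct combinatorial proof of the modular law for $\LLT_h$, treating case (1) of Definition \ref{59} first and reducing the other cases to it. I focus on $\LLT_h$ because once the LLT case is clear, the $\csf_h$ case follows by the same scheme with extra bookkeeping for properness.

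First I would pin down the edge-level differences between $\G_{h_-}, \G_h, \G_{h_+}$. Since the three Hessenberg functions differ only at $h_\bullet(j_0) \in \{j-1, j, j+1\}$, the three indifference graphs differ only in the two edges $(j_0, j)$ and $(j_0, j+1)$: we have $(j_0, j) \in \E(\G_h) \cap \E(\G_{h_+})$ but not in $\E(\G_{h_-})$; $(j_0, j+1) \in \E(\G_{h_+})$ only; and $(j, j+1)$ lies in all three graphs. The hypothesis $h^{-1}(j) = \{j_0\}$ forces a crucial symmetry: for every $i \in [n]\setminus\{j_0, j, j+1\}$ the edges $(i, j)$ and $(i, j+1)$ either both lie in $\E(\G_h)$ or neither does (since $h(i) \neq j$ for $i \neq j_0$), and likewise for edges $(j, i')$ vs.\ $(j+1, i')$ with $i' > j+1$.

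Next I would decompose each coloring $\gamma : [n] \to \N$ into the triple $(A, B, C) = (\gamma(j_0), \gamma(j), \gamma(j+1))$ and the restriction $\sigma = \gamma|_{[n]\setminus\{j_0, j, j+1\}}$. Split $\asc_\bullet(\gamma) = \mathrm{ext}(\sigma; A, B, C) + \mathrm{int}_\bullet(A, B, C)$, where $\mathrm{int}_\bullet$ is the contribution from edges entirely inside $\{j_0, j, j+1\}$. The paired-edge observation above shows that $\mathrm{ext}$ is identical for $h, h_-, h_+$ and is symmetric under $B \leftrightarrow C$. The internal parts are explicitly
\[
\mathrm{int}_h = [A<B] + [B<C], \quad \mathrm{int}_{h_-} = [B<C], \quad \mathrm{int}_{h_+} = [A<B] + [A<C] + [B<C].
\]
After collecting $x_A x_B x_C q^{\mathrm{ext}}$ as a weight $w(A, B, C)$ that is symmetric in $(B, C)$, the modular law reduces to the local identity
\[
(1+q)\!\sum_{A,B,C} w\, q^{[A<B]+[B<C]} = \sum_{A,B,C} w\, q^{[A<B]+[A<C]+[B<C]} + q\sum_{A,B,C} w\, q^{[B<C]}.
\]

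I would verify this local identity by the involution $\iota(A, B, C) = (A, C, B)$. Writing $(P, Q, R) = ([A<B], [A<C], [B<C])$, the pointwise discrepancy $(1+q)q^{P+R} - q^{P+Q+R} - q^{R+1}$ vanishes whenever $P = Q$, equals $q - q^2$ on triples with $B \le A < C$ (the case $(P, Q, R) = (0, 1, 1)$), and equals $q^2 - q$ on triples with $C \le A < B$ (the case $(1, 0, 0)$). The involution $\iota$ exchanges exactly these two types while preserving $w$ (by its symmetry in $B, C$), so the two discrepancies cancel on each nontrivial orbit; fixed points $B = C$ force $P = Q$ and contribute nothing. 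Case (2) of Definition \ref{59} then follows from case (1) by the order-reversing involution $i \mapsto n+1-i$ on $[n]$, which swaps the two cases (Remark \ref{r1}). For $\csf_h$, the same local decomposition applies, but the internal summation is restricted to proper triples: $\{A \neq B,\ B \neq C\}$ for $h$, $\{B \neq C\}$ for $h_-$, and $\{A, B, C\ \text{distinct}\}$ for $h_+$. A refined case analysis with the same involution $\iota$, separately tracking the subcases $A = B$ and $A = C$, yields the modular law for $\csf_h$ as well.

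The main obstacle is really just the bookkeeping: isolating the genuinely local edges, verifying that the external weight $q^{\mathrm{ext}}$ is symmetric in $B, C$ (which relies critically on $h^{-1}(j) = \{j_0\}$), and carefully handling the $\csf_h$ case where the properness constraint on $(j_0, j)$ breaks the apparent symmetry and must be absorbed into a separate treatment of the terms where $A$ coincides with $B$ or $C$. Once these are in place, the involution $\iota$ closes the argument in a single stroke.
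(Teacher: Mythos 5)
The paper does not prove this proposition; it simply attributes it to \cite{AN, Ale, Lee}. So there is no ``paper's own proof'' to compare against, and what you have written is a self-contained combinatorial argument that would fill that gap. Your involution argument is correct. I checked the case decomposition: for case (1) the internal ascent counts are exactly $\mathrm{int}_h = [A<B]+[B<C]$, $\mathrm{int}_{h_-}=[B<C]$, $\mathrm{int}_{h_+}=[A<B]+[A<C]+[B<C]$; the hypothesis $h^{-1}(j)=\{j_0\}$ together with $h(j)=h(j+1)$ does make the external ascent count $\mathrm{ext}(\sigma;A,B,C)$ symmetric under $B\leftrightarrow C$ (both the left-going pairs $(i,j),(i,j+1)$ for $i\neq j_0$ and the right-going pairs $(j,i'),(j+1,i')$ are simultaneously present or absent); the discrepancy $(1+q)q^{P+R}-q^{P+Q+R}-q^{R+1}$ is nonzero only on the two orbits $(P,Q,R)=(0,1,1)$ and $(1,0,0)$ that your involution $\iota$ exchanges; and I verified by direct computation that the refined case analysis for $\csf$ (the subcases $A=B$, $A=C$, and $A,B,C$ distinct) also cancels under $\iota$.

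One point you should make explicit rather than treat as automatic: the reduction of type (2) to type (1) via $h\mapsto h^t$ uses the identity $\LLT_h=\LLT_{h^t}$ (and $\csf_h=\csf_{h^t}$). The relabeling $i\mapsto n+1-i$ sends colorings of $\G_h$ to colorings of $\G_{h^t}$ and turns ascents into descents, so you need the extra observation that $\sum_\gamma q^{\asc_h(\gamma)}x_\gamma=\sum_\gamma q^{\mathrm{desc}_h(\gamma)}x_\gamma$, which holds because $\LLT_h$ and $\csf_h$ are symmetric functions (truncate to colors in $[N]$, apply the permutation $c\mapsto N+1-c$, and let $N\to\infty$). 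Alternatively, it may be cleaner to just run the same local argument directly for type (2): there the three special vertices are $j$, $j+1$, and $m:=h(j+1)$, the internal edges are $(j,j+1)$ (in all three), $(j+1,m)$ (in $\G_h$ and $\G_{h_+}$), and $(j,m)$ (in $\G_{h_+}$ only), the hypothesis $h^{-1}(j)=\emptyset$ together with $h(j)+1=h(j+1)$ makes the external weight symmetric under swapping the colors of $j$ and $j+1$, and the resulting discrepancy has exactly the same form as in type (1), so the same involution closes the argument without invoking transposition invariance.
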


The modular law, analogous to the deletion-contraction property in chromatic polynomials, plays a crucial role in determining these symmetric functions recursively.

For $m\geq 1$, let 
\[[m]_q:=\frac{1-q^m}{1-q}=1+q+\cdots +q^{m-1}, \quad  [m]_q!:=\prod_{i=1}^m [i]_q!.\] We set $[0]_q!:=1$. Moreover, let $e_m:=\sum_{1\leq i_1<\cdots <i_m}x_{i_1}\cdots x_{i_m}\in \Lambda$ be the $m$-th elementary symmetric function.

\begin{theorem} \cite{AN,AN2} \label{46}
	$\LLT_{(-)}$ (resp. $\csf_{(-)}$) is the unique function $F$ from the set of Hessenberg functions to $\Lambda[q]$ satisfying the following.
	\begin{enumerate}
		\item For $h:[n]\to [n]$ with $h(i)=n$ for all $i$, $\cK_n:=F(h)$ satisfies  
		\beq \label{13} \cK_n=\sum_{i=1}^n (q-1)^{i-1}\frac{[n-1]_q!}{[n-i]_q!}e_i \cK_{n-i}, \qquad \cK_0:=1\eeq
		(resp. $\cK_n=[n]_q!e_n$). 
		\item It is multiplicative: when $h(j)=j$ for $1\leq j<n$,
		\beq\label{n3} F(h)=F(h') F(h'')\eeq
		where $h':[j]\to[j]$ and $h'':[n-j]\to [n-j]$ are Hessenberg functions
		defined by
		\[h'(i)=i ~\text{ for }~i\in[j] \and h''(i)=h(i+j)-j ~\text{ for }~i\in [n-j].\] 
		\item It satisfies the modular law \eqref{5}.
	\end{enumerate}
\end{theorem}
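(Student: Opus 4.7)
The plan is to note first that existence is already known: $\LLT_{(-)}$ and $\csf_{(-)}$ satisfy (1) and (2) directly from the combinatorial definitions in Definition \ref{n2}, and (3) is the modular-law proposition stated immediately above. So the real content is uniqueness: any $F$ satisfying (1)--(3) is determined by those conditions.

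I would proceed by strong induction on $n$, assuming the values $F(h')$ are determined for every Hessenberg function $h' : [n'] \to [n']$ with $n' < n$. Fix $h : [n] \to [n]$. If $h(j) = j$ for some $1 \le j < n$, then (2) writes $F(h)$ as a product of values on Hessenberg functions with strictly smaller domain, so $F(h)$ is determined by the outer induction. Otherwise $h$ is \emph{connected}, meaning $h(j) > j$ for every $j < n$, and I would run a secondary induction using the modular law. Set $d(h) := \sum_{i=1}^n (n - h(i))$; when $d(h) = 0$ one has $h(i) \equiv n$, and $F(h) = \cK_n$ is pinned down by the recursion (1). For $d(h) > 0$, the aim is to find a modular triple $(h_-, h_0, h_+)$ in the sense of Definition \ref{59} containing $h$ as one of its entries and use the modular law
\[
(1+q)F(h_0) = F(h_+) + qF(h_-)
\]
to solve for $F(h)$ in terms of the remaining two members. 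Each modular move changes the edge count of $\G_h$ by exactly one, so the partners have deficiency differing from $d(h)$ by $\pm 1$ or $\pm 2$; the point is to arrange that each partner is either connected with strictly smaller deficiency (handled by the secondary induction), or disconnected in the sense that some $h(j)=j$ occurs for $j < n$ (handled by the outer induction via multiplicativity).

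The main obstacle is the combinatorial lemma underlying this strategy: every connected non-maximal Hessenberg function admits such a modular move. To prove it, I would analyze the staircase $h(1) \le h(2) \le \cdots \le h(n)$ near a place where $h$ deviates from $h_{\max}$, and show that one of the two configurations of Definition \ref{59} is forced to occur: a plateau $h(j) = h(j+1)$ with $h^{-1}(j)$ a singleton produces a triple of type (1), while a unit jump $h(j)+1 = h(j+1) \ne j+1$ with $h^{-1}(j) = \emptyset$ produces one of type (2); connectedness plus non-maximality of $h$ ensure at least one such index $j$ exists. In certain configurations the other partner remains connected with \emph{larger} deficiency, and one must chain two applications of the modular law together (eliminating the offending intermediate unknown algebraically) to express $F(h)$ in terms of strictly simpler data. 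Organizing this chaining so that the induction on the lexicographic pair $(n, d(h))$, refined by connectedness, terminates cleanly is the delicate step; once this combinatorial bookkeeping is in place, the induction runs formally and the theorem follows.
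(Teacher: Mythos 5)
The paper does not prove this theorem: it is stated with citations to \cite{AN, AN2} and used as a black box, so there is no in-paper proof to compare against. What I can assess is whether your sketch is sound on its own terms.

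Your reduction of the problem to uniqueness, and the outer induction on $n$ via multiplicativity for disconnected $h$, are fine. The genuine gap is exactly where you flag it: the inner induction does not close, and the ``chaining'' you invoke is not a local patch but the whole substance of the proof. Concretely, take the chain $g_k := (k, n, n, \ldots, n)$ for $1 \le k \le n$. Each $g_k$ with $2 \le k \le n-1$ is the \emph{middle} term of the unique modular triple $(g_{k-1}, g_k, g_{k+1})$ containing it, so one partner always has strictly larger deficiency; there is no modular move that expresses $F(g_k)$ purely in terms of simpler data, and a well-founded induction on $(n, d(h))$ does not run. What one actually has is a coupled system of $n-2$ linear equations $(1+q)F(g_k) - F(g_{k+1}) - q F(g_{k-1}) = 0$ in the $n-2$ unknowns $F(g_2), \ldots, F(g_{n-1})$, with boundary data $F(g_1)$ (disconnected, handled by (2)) and $F(g_n) = \cK_n$ (handled by (1)). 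This is a boundary value problem, not a recursion; one must verify the relevant tridiagonal determinant (which works out to $[n-1]_q$) is a non-zero-divisor in $\Lambda[q]$, and then repeat a comparable analysis for every other ``connected staircase shape,'' where the graph of modular triples is no longer a single path. That global linear-algebra argument, or an equivalent carefully designed potential function that does terminate, is the content of the Abreu--Nigro proof you are citing; it cannot be dismissed as a ``delicate step'' to be ``organized cleanly.''

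Separately, the combinatorial lemma you assert (every connected non-maximal Hessenberg function sits in some modular triple of type (1) or (2)) is stated but not proved, and it is used essentially. It is true, but a proof is needed, since the hypotheses $h^{-1}(j) = \{j_0\}$ and $h^{-1}(j) = \emptyset$ are restrictive and it is not immediate that some index $j$ satisfies one of them for an arbitrary connected $h \ne h_{\max}$.
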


\medskip

\begin{remark}
One fundamental technique in representation theory and combinatorics is to 
construct a geometric object corresponding to an object of interest. 
Hard combinatorics problems are often translated into well known geometry problems and solved subsequently, as demonstrated by the recent spectacular works of June Huh.

The Shareshian-Wachs conjecture  formulated in \cite{SW} and proved in \cite{BC, GP2}  tells us 
that the chromatic quasisymmetric function \eqref{n11} is the $\omega$-dual of the Frobenius characteristic 
$$\cF(h):=\sum_{k\ge 0} \ch (H^{2k}(X_h)) q^k \ \ \in \ \  \Lambda[q]$$ 
of regular semisimple Hessenberg varieties $X_h$ in \S\ref{n29} below. Here $\omega$ is an involution of $\Lambda$ interchanging each Schur function with its transpose.  
The first two conditions (1) and (2) in Theorem \ref{46} for $\csf$ are easy to check for $\omega \cF(h)$ and hence the Shareshian-Wachs conjecture follows immediately from the modular law \eqref{5} for $\cF(h)$. 
In \cite{KL}, we investigated on the geometry of generalized Hessenberg varieties and constructed canonical $S_n$-equivariant isomorphisms
$$H^{2k}(X_h)\oplus H^{2k-2}(X_h)\cong H^{2k}(X_{h_+})\oplus H^{2k-2}(X_{h_-}).$$
Upon taking the Frobenius characteristic, we obtain the modular law \eqref{5} and hence the Shareshian-Wachs conjecture $$\cF(h)=\omega\, \csf_h(q).$$ 

In the remainder of this paper, we will prove that the three conditions in Theorem \ref{46} for unicellular LLT are satisfied for the Frobenius characteristics of representations of $S_n$ on the cohomology of the twin manifolds $Y_h$ of regular semisimple Hessenberg varieties $X_h$, by finding geometric relations among the twin manifolds that give rise to the modular law \eqref{5} upon taking cohomology. This will give us a direct proof of the LLT-SW conjecture (cf.~Theorem \ref{48}) without relying on the Shareshian-Wachs conjecture.
\end{remark}

\bigskip

\section{Twin manifolds and their cohomology}\label{14}
In this section, we collect necessary facts about the twin manifolds $Y_h$ of regular semisimple Hessenberg varieties $X_h$ of type A 
from \cite{AB}.

Let $h:[n]\to [n]$ be a Hessenberg function (Definition \ref{n1}) where $[n]=\{1,\cdots, n\}$. 
Let $$x=\mathrm{diag}(\lambda_1,\cdots,\lambda_n),\quad
\lambda_1 >\lambda_2 >\cdots > \lambda_n \in \RR$$
be a fixed regular semisimple diagonal matrix. 
Let $T\cong U(1)^n$ denote the group of diagonal unitary matrices.

\subsection{Isospectral manifolds} \label{n46} 
Let $\mathcal{H}$ denote the real vector space of $n\times n$ Hermitian matrices. 
Let $Y=Y(x)\subset \mathcal{H}$ be the set of $n\times n$ Hermitian matrices whose characteristic polynomial is $\prod_{i=1}^n(t-\lambda_i)$.  In other words, $Y$ is the set of $n\times n$ Hermitian matrices with fixed (unordered) spectrum $\{\lambda_i\}$. 
As the diffeomorphism type of $Y$ is independent of $x$ by \cite[Theorem 3.5]{AB}, we will suppress $x$ to simplify the notation.   

By the spectral decomposition theorem in linear algebra, any matrix in $Y$ is of the form
$g^{-1}xg,$ with $g\in U(n)$
and the map  \[ U(n)\lra Y\subset \cH,  \quad g\mapsto g^{-1}xg\]
induces a diffeomorphism 
\beq\label{n7} T\backslash U(n)\cong Y,\quad Tg\mapsto g^{-1}xg.\eeq
In particular, $Y$ is a compact smooth orientable manifold of real dimension $n^2-n$. 

Let $X:=\Fl(n)$ denote the variety of flags of $\CC$-linear subspaces
$$(V_1\subset V_2\subset\cdots\subset V_n=\CC^n), \quad \dim\, V_i=i$$
which is a smooth projective variety of real dimension $n^2-n$. 
As the columns of a unitary matrix define a flag in $\CC^n$, we have 
$$X=\Fl(n)\cong U(n)/T.$$

The isospectral manifold $Y$ and the flag variety $X$ fit into the following diagram
\beq\label{n4}
Y\cong T\backslash U(n)\xleftarrow{~p_1~}U(n)\xrightarrow{~p_2~}U(n)/T\cong \Fl(n)=X
\eeq
where $p_1$ is the left quotient and $p_2$ is the right quotient.

\subsection{Hessenberg varieties and their twins}\label{n29}

For a Hessenberg function $h:[n]\to [n]$, the \emph{Hessenberg variety} associated to $h$ is
defined in \cite{DMPS} as 
\[X_h:=\{(V_1\subset V_2\subset \cdots \subset  V_{n}= \C^n)\in X : xV_i\subset V_{h(i)} ~\text{ for all }~ i\}.\]
Under our assumptions, by \cite{DMPS}, 
the Hessenberg variety $X_h$ is a smooth projective variety of real dimension 
\beq\label{n6}
2\, \sum_{i=1}^n (h(i)-i).\eeq
See \cite{AH} for a recent survey on Hessenberg varieties.

The \emph{twin manifold} $Y_h$ of $X_h$ is a submanifold of $Y$ defined in \cite{AB}  by 
\beq\label{n5} Y_h:=p_1(Z_h), \quad Z_h:=p_2^{-1}(X_h)\eeq
where $p_1$ and $p_2$ are the quotient maps in \eqref{n4}.

By \cite[Theorem 3.5]{AB},
$Y_h$ is a compact real smooth manifold of dimension \eqref{n6} whose diffeomorphism type is independent of the choice of $x$.
Using \eqref{n7}, it is straightforward to check that $Y_h$ is precisely, the locus of staircase matrices 
\beq \label{25} Y_h=\{y\in Y~:~y_{ij}=0 \text{ if either }i>h(j) \text{ or }j>h(i)\}\eeq
where $y_{ij}=\overline{y_{ji}}$ denotes the entry of the Hermitian matrix $y$ at the $i$-th row and $j$-th column. 

Since the real dimension of $Y$ is $n^2-n=2 \sum_{i=1}^{n}(n-i)$ and $Y_h$ is defined by the vanishing of $\sum_{i=1}^{n}(n-h(i))$ complex valued functions by \eqref{25}, the expected dimension 
$$2\sum_{i=1}^n(n-i)-2\sum_{i=1}^n(n-h(i))$$
of $Y_h$ coincides with the actual dimension \eqref{n6}. 
In particular, $Y_h$ is the {transversal} vanishing locus of 
\beq \label{23}f_{ij}:
Y\lra \C, \quad y\mapsto y_{ij}\eeq
where $(i,j)$ runs over the pairs with $h(i)<j$, or equivalently $h(j)<i$. 

By \eqref{n7}, we have a right action of $T$ on $Y$ by 
\beq\label{n8} Y\times T\lra Y,\quad y\cdot t=t^{-1}yt.\eeq
If we let $t=\diag(t_1,\cdots, t_n)\in T$, then 
\beq\label{24}  f_{ij}(y\cdot t)=t_i^{-1}t_jf_{ij}(y).\eeq 
In particular, we have an induced $T$-action on $Y_h$ for every Hessenberg function $h$, 
whose fixed point set is exactly 
\beq\label{n10}
Y_h^T=\{\diag(\lambda_{\sigma(1)},\cdots,\lambda_{\sigma(n)})\,|\, \sigma\in S_n\}\cong S_n.\eeq

Note that our notation is different from that in \cite{AB}, where Hessenberg varieties and their twin manifolds are denoted by $Y_h$ and $X_h$ respectively.

\subsection{Goresky-Kottwitz-MacPherson theory} 
When a manifold admits a nice torus action, we can compute its equivariant cohomology 
from the data of 0 and 1-dimensional orbits. 

\begin{definition}\label{n9}
(See \cite[Definition 5.1]{AB}.)
A compact orientable manifold $M$ with a smooth action of a compact torus $T=U(1)^n$ is called a \emph{GKM manifold} if it satisfies the following conditions. 
	\begin{enumerate}
		\item $M$ is equivariantly formal.
		\item The set $M^T$ of $T$-fixed points is finite.
		\item The weights of the induced representation of $T$ on the tangent space of $M$ at each $y\in M^T$ are pairwisely non-collinear: if
		\[\mathbb{T}_M|_y \cong \bigoplus_{i=1}^m \C \a_i, \quad \a_i\in \Hom (T,U(1))\cong \ZZ^n, \]
			then $\a_i$ and $\a_j$ are non-collinear as vectors 
			whenever $i\neq j$.
		\item Every 2-dimensional $T$-invariant closed submanifold which is the union of $T$-orbits of dimension at most one, has a $T$-fixed point. 
	\end{enumerate}
\end{definition}
By (2)--(4) in Definition \ref{n9}, the \emph{1-skeleton} of $M$, which is by definition the union of 0 or 1-dimensional $T$-orbits, is the union of $M^T$ and $T$-invariant 2-spheres. The induced $T$-action on each $T$-invariant 2-sphere $S^2\cong \PP^1$ is of the form
\[T\times \PP^1\lra \PP^1, \qquad (t,[z_0:z_1])\mapsto [z_0:\a(t)z_1]\]
for some $\a\in \Hom(T,U(1))\cong \ZZ^n$. In particular, each $T$-invariant 2-sphere connects precisely two $T$-fixed points, with the associated weight $\a$ determined uniquely up to sign.

The GKM theory \cite{GKM, Kur} tells us that the $T$-equivariant cohomology $H^*_T(M)$ of a GKM manifold $M$ is determined by the combinatorial data of its 1-skeleton as a subring of the $T$-equivariant cohomology $H^*_T(M^T)$ of its 
$T$-fixed point set. 
Indeed, by torus localization, 
$H^*_T(M)$ is embedded into 
\[H^*_T(M^T) 
\cong \bigoplus_{y\in M^T}\QQ[t_1,\cdots, t_n]\] 
by the pullback homomorphism induced by the inclusion $M^T\subset M$.
\begin{theorem} 
\cite[Theorem 5.2]{AB} \label{35} 
The image of $H^*_T(M)$ in $H^*_T(M^T)$ is 
	\[H^*_T(M)\cong \{(f_y)_{y\in M^T}: f_{y_e}\equiv f_{y_e'} \mathrm{~modulo~}\a_e\}\]
	as an algebra over $H^*_T:=H^*_T(\mathrm{pt})\cong \QQ[t_1,\cdots,t_n]$, where $f_y\in H^*_T(\{y\})\cong \QQ[t_1,\cdots,t_n]$, $e$ runs over the set of $T$-invariant 2-spheres in $M$, $\{y_e,y_e'\}$ is the set of $T$-fixed points in $e$ and $\a_e$ is the $T$-weight associated to $e$.
\end{theorem}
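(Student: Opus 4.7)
The plan is to follow the classical GKM strategy, which rests on two main inputs: Atiyah--Bott localization and the Chang--Skjelbred lemma. First I would use condition (1) of Definition~\ref{n9}: equivariant formality implies that $H^*_T(M)$ is a free module over $H^*_T\cong\QQ[t_1,\ldots,t_n]$. Together with condition (2) (isolated fixed points), the Atiyah--Bott localization theorem then yields that the pullback $\iota^*:H^*_T(M)\to H^*_T(M^T)\cong \bigoplus_{y\in M^T}\QQ[t_1,\ldots,t_n]$ is injective, since its kernel is $H^*_T$-torsion but $H^*_T(M)$ is torsion-free.

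To identify the image of $\iota^*$, I would invoke the Chang--Skjelbred lemma: under equivariant formality, the image of $H^*_T(M)$ in $H^*_T(M^T)$ coincides with the image of the restriction $H^*_T(M_1)\to H^*_T(M^T)$, where $M_1$ denotes the 1-skeleton, i.e.\ the union of $T$-orbits of dimension at most one. Conditions (3) and (4) then ensure that $M_1$ decomposes as the union of $M^T$ with a family of $T$-invariant 2-spheres $e\cong \PP^1$ glued along fixed points: each sphere joins exactly two fixed points $y_e,y_e'$ and is acted on through a single weight $\a_e$ determined up to sign.

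I would next reduce the computation to a single 2-sphere. The standard affine cover of $\PP^1$ and an equivariant Mayer--Vietoris show
\[H^*_T(e)\hookrightarrow H^*_T(\{y_e\})\oplus H^*_T(\{y_e'\})\]
has image precisely $\{(f,g):f\equiv g \pmod{\a_e}\}$. Finally, using condition (3) (pairwise non-collinearity of tangent weights at each fixed point), the congruences attached to distinct 2-spheres sharing a fixed point are pairwise coprime in $\QQ[t_1,\ldots,t_n]$, so a Mayer--Vietoris patch over the decomposition of $M_1$ assembles the local conditions into exactly the subring claimed.

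The main obstacle is the Chang--Skjelbred step itself. Its proof is a piece of homological algebra using the Atiyah--Bredon resolution coming from the filtration of $M$ by orbit dimension, and it relies crucially on equivariant formality to guarantee that the lowest-degree part of this complex already captures the image of restriction to $M^T$. Without this input, the inclusion $\mathrm{im}(\iota^*)\subseteq \mathrm{im}\bigl(H^*_T(M_1)\to H^*_T(M^T)\bigr)$ would be proper in general, and the clean GKM description would fail.
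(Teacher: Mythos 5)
The paper does not prove this statement: it is imported verbatim as \cite[Theorem 5.2]{AB} (with the general GKM references \cite{GKM,Kur} given a few lines earlier), so there is no internal proof to compare your argument against. That said, your proposal is the standard proof of the GKM description, and the overall strategy is sound: equivariant formality gives freeness and hence, via Atiyah--Bott localization and isolated fixed points, injectivity of $\iota^*$; Chang--Skjelbred reduces the computation of $\mathrm{im}(\iota^*)$ to the 1-skeleton $M_1$; conditions (2)--(4) identify $M_1$ as a union of invariant $\PP^1$'s through pairs of fixed points; and the local calculation on a single equivariant $\PP^1$ produces the congruence $f_{y_e}\equiv f_{y_e'}\pmod{\a_e}$.

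One small imprecision worth flagging is your invocation of condition (3) at the assembly step. The non-collinearity of tangent weights is not used to establish that the congruences at a shared fixed point are ``pairwise coprime''; rather, its role is upstream, in guaranteeing that the 1-skeleton has the clean structure you are relying on: that each connected component of $M^{T'}$ for a codimension-one subtorus $T'$ is at most a single $\PP^1$ through any fixed point, so that $M_1$ really is a union of 2-spheres meeting only at fixed points. Once that structure is known, the assembly step is a direct equivariant Mayer--Vietoris over the decomposition of $M_1$, in which the overlaps are discrete fixed points; no coprimality of the $\a_e$'s is needed, and indeed such coprimality is not part of the GKM hypotheses (two edges through the same vertex may well carry non-coprime weights, e.g.\ $t_1-t_2$ and $t_2-t_3$). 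With that adjustment the argument is correct and standard.
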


\subsection{Equivariant cohomology of $Y_h$} 
For the $T$-action on $Y_h$ by \eqref{n8}, we may use the GKM theory to investigate the cohomology of $Y_h$.

\begin{theorem} \label{34}
	$Y_h$ is a GKM manifold by the following.
	\begin{enumerate}
	\item The cohomology groups of $Y_h$ vanish in odd degrees. In particular, $Y_h$ is equivariantly formal.
	\item The set of $T$-fixed points is $Y_h^T\cong S_n$.
	\item Two $T$-fixed points $\sigma, \tau \in S_n$ are connected by a $T$-invariant 2-sphere in $Y_h$ if and only if $\tau=\sigma\cdot (i,j)$ for some $i<j\leq h(i)$, where  
		$(i,j)$ denotes the transposition interchanging $i$ and $j$.
	\item The tangent space $\mathbb{T}_{Y_h}|_y$ of $Y_h$ at $y\in Y_h^T$ is isomorphic to 
\[\mathbb{T}_{Y_h}|_y \cong \bigoplus_{i<j\leq h(i)}\C \e_{ij}, \quad \e_{ij}=\e_i-\e_j\]
 as a $T$-representation, where $\e_i\in \Hom(T,U(1))$ denotes the character of $T$ sending $(t_1,\cdots,t_n)\in T$ to $t_i$.
\end{enumerate}
\end{theorem}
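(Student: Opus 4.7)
The plan is to verify the four GKM conditions in Definition~\ref{n9} for $Y_h$ together with the explicit descriptions of the $T$-fixed points, the tangent weights, and the $T$-invariant $2$-spheres. Parts (2), (3), (4) of the statement are essentially local computations from \eqref{25} and \eqref{24}; the main technical point is equivariant formality in (1), which I plan to deduce from the isomorphism \eqref{n47} together with the equivariant formality of the Hessenberg variety $X_h$.

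First I would handle (2) and (4) together. A Hermitian matrix $y\in Y$ is $T$-fixed under \eqref{n8} if and only if $t_i^{-1}t_j\, y_{ij}=y_{ij}$ for all $t\in T$, forcing $y_{ij}=0$ whenever $i\neq j$; hence $y$ is diagonal with spectrum $\{\lambda_1,\ldots,\lambda_n\}$, recovering \eqref{n10}. Since $Y_h$ is by \eqref{23} the transversal vanishing locus of the $T$-equivariant functions $f_{ij}$ (for $h(i)<j$), its tangent space at any fixed point $y$ sits inside $\mathbb{T}_Y|_y\cong\bigoplus_{i<j}\C\cdot E_{ij}$ as the subspace cut out by the differentials $df_{ij}$ with $h(i)<j$, namely $\bigoplus_{i<j\le h(i)}\C\cdot E_{ij}$. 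The $T$-weight of $E_{ij}$ is read off from \eqref{24} as $\e_i-\e_j=\e_{ij}$, which yields (4); the pairwise non-collinearity of these weights simultaneously verifies condition (3) of Definition~\ref{n9}.

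Next I would establish (3) and, in parallel, condition (4) of Definition~\ref{n9}. For $i<j\le h(i)$, embed $U(2)\hookrightarrow U(n)$ as the subgroup acting only on the $i$-th and $j$-th rows and columns; this action preserves the staircase condition \eqref{25}, so the orbit through a fixed point $\sigma\in Y_h^T$ lies in $Y_h$, and since $U(2)/U(1)^2\cong S^2$ it is a $T$-invariant $2$-sphere joining $\sigma$ and $\sigma\cdot(i,j)$ with associated weight $\e_{ij}$. Conversely, any $T$-invariant $2$-sphere in $Y_h$ through $\sigma$ corresponds via the local tangent analysis in (4) to a single weight at $\sigma$, so the other endpoint must be $\sigma\cdot(i,j)$ and the list is exhaustive. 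Any $T$-invariant compact $2$-submanifold of $Y_h$ which is a union of orbits of dimension at most one therefore lies in this union of $2$-spheres, and in particular contains a $T$-fixed point, establishing condition (4) of Definition~\ref{n9}.

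Finally, for (1), I would deduce equivariant formality of $Y_h$ from that of $X_h$ via \eqref{n47}. The Hessenberg variety $X_h$ admits a Bialynicki-Birula decomposition into $n!$ affine cells, so $H^*(X_h)$ vanishes in odd degrees and $X_h$ is equivariantly formal; hence $H^*_T(X_h)$ is a free $H^*_T$-module of rank $n!$. Via \eqref{n47}, $H^*_T(Y_h)$ is then a free $H^*_T$-module of the same rank $n!=|Y_h^T|$, and the standard rank criterion for equivariant formality of $T$-manifolds with finite fixed-point set forces $H^*(Y_h)$ to be concentrated in even degrees with $H^*_T(Y_h)\cong H^*(Y_h)\otimes H^*_T$. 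The main obstacle is precisely this last step: equivariant formality is a global statement, so one must verify carefully that \eqref{n47} genuinely preserves the $H^*_T$-module structure so that the rank transfer is legitimate. An alternative route would be a direct Morse-theoretic argument using a generic $1$-parameter subgroup of $T$, whose critical set on $Y_h$ is $Y_h^T$ with all Morse indices even by the complex weight decomposition established in (4).
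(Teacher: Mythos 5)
The paper's own proof of this theorem is simply a citation to Ayzenberg--Buchstaber (\cite[Theorem~3.5, Remark~3.7, Proposition~3.9]{AB}); you are supplying a self-contained argument, so the comparison is really between your direct approach and the source those citations point to.

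Your treatment of (2) and (4) is correct: the $T$-fixed points are diagonal, and the tangent weights drop out of the transversal cut-out description \eqref{23}, \eqref{24}. For (3), the $U(2)$-orbit construction is the right one, but note that the $U(2)$-conjugation does \emph{not} preserve $Y_h$ globally -- what is true (and what you need) is that the orbit through a \emph{diagonal} matrix remains in $Y_h$ precisely when $j\le h(i)$, since conjugating a diagonal matrix by $\iota(U)$ only populates the $\{i,j\}\times\{i,j\}$ block. Your exhaustiveness claim (that every $T$-invariant $2$-sphere arises this way, and that condition~(4) of Definition~\ref{n9} holds) is asserted via non-collinearity but really needs the observation that the fixed locus of the corank-one subtorus $\ker\e_{ij}$ near a fixed point is exactly this $U(2)$-orbit; this should be made explicit.

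The real issue is (1). The isomorphism \eqref{n47}, realized concretely by $\xi$ in \eqref{15}/\eqref{n49}, is a ring isomorphism but is \emph{not} an isomorphism of $H^*_T$-modules in the naive sense. A direct computation gives
\[
\xi\bigl(f\cdot(p_v)_v\bigr)=\bigl((vf)(vp_v)\bigr)_v=\phi(f)\cdot\xi\bigl((p_v)_v\bigr),\qquad \phi(f):=(vf)_v,
\]
and $\phi(f)\ne (f)_v$ unless $f$ is $S_n$-invariant. So $\xi$ intertwines the standard $H^*_T$-module structure on $H^*_T(Y_h)$ with a \emph{twisted} module structure on $H^*_T(X_h)$; freeness over the standard copy of $H^*_T$ in $H^*_T(X_h)$ does not immediately transfer. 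One could try to salvage this via a Cohen--Macaulay argument (exhibit $\phi(t_1),\dots,\phi(t_n)$ as a second homogeneous system of parameters, both containing $\QQ[t]^{S_n}$), but that is substantially more work than ``verifying carefully that the module structure is preserved,'' and you cannot invoke the GKM description \eqref{6} or Proposition~\ref{n32} here without circularity, since those presuppose equivariant formality. Your alternative route -- a generic one-parameter subgroup of $T$ yielding a Morse function on $Y_h$ with critical set $Y_h^T$, all indices even by the complex weight decomposition in (4), hence perfect by the lacunary principle -- is the sound argument, and it is essentially what \cite{AB} does; it should be promoted from a fallback to the main proof of (1).
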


\begin{proof}
	For (1), see \cite[Theorem 3.5 and Remark 3.7]{AB}. \eqref{n10} gives (2). 
	For (3), see below the proof of \cite[Proposition 3.9]{AB}.
	For (4), see \cite[Proposition 3.9]{AB} and its proof in page 16687. 
\end{proof}
By Theorem \ref{35}, we thus have the following description of $H^*_T(Y_h)$, via the  restriction map
\beq \label{21}\res:H^*_T(Y_h)\hookrightarrow H^*_T(Y_h^T)\cong \bigoplus_{v\in S_n}\QQ[t_1,\cdots, t_n]\eeq
induced by the inclusion $Y_h^T\subset Y_h$.
\begin{proposition}\label{n32} \cite[Proposition 5.3]{AB}
\beq \label{6} H^*_T(Y_h)\cong \left \{(p_v)_{v\in S_n}:  
p_v\equiv p_{v\cdot (i,j)} \mathrm{~modulo~} t_i-t_j \ \ \forall  i<j\leq h(i)\right \}\eeq
 as algebras over $H^*_T=\Q[t_1,\cdots, t_n]$, where $p_v\in \Q[t_1,\cdots, t_n]$.
 \end{proposition}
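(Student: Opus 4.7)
The plan is to apply the general GKM description in Theorem \ref{35} to the twin manifold $Y_h$, using the structural data collected in Theorem \ref{34}, so that the proposition reduces to a bookkeeping identification of edges and their weights.

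First I would invoke Theorem \ref{34}: part (1) establishes equivariant formality, part (2) identifies $Y_h^T$ with $S_n$, part (3) describes the $T$-invariant 2-spheres (one for each pair $\sigma, \sigma\cdot(i,j)$ with $i<j\leq h(i)$), and part (4) records the tangent weights at each fixed point. Together with the verification of condition (4) of Definition \ref{n9}, which one can read off from the local model of $Y_h$ near a fixed point as the transversal zero locus of the functions $f_{ij}$ in \eqref{23}, this certifies $Y_h$ as a GKM manifold, so that Theorem \ref{35} applies and embeds $H^*_T(Y_h)$ into $\bigoplus_{v\in S_n}\Q[t_1,\dots,t_n]$ through the restriction map \eqref{21}.

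Next I would identify the weight $\alpha_e$ associated to each $T$-invariant 2-sphere $e$ connecting $\sigma$ and $\sigma\cdot(i,j)$ with $i<j\leq h(i)$. By Theorem \ref{34}(4), the tangent space to $Y_h$ at any fixed point decomposes with weights $\epsilon_k-\epsilon_l$ indexed by pairs $k<l\leq h(k)$, where the summand with weight $\epsilon_i-\epsilon_j$ corresponds to the $(i,j)$-matrix entry direction; this matches the transformation law \eqref{24} for $f_{ij}$ under the $T$-action \eqref{n8}. Since the 2-sphere $e$ is tangent to the $(i,j)$-entry direction at both of its fixed points, we conclude $\alpha_e = t_i - t_j$ up to sign under the canonical identification $\epsilon_k \leftrightarrow t_k$ in $H^*_T\cong \Q[t_1,\dots,t_n]$.

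Substituting these edge data into the description of Theorem \ref{35} yields exactly the congruences $p_\sigma \equiv p_{\sigma\cdot(i,j)} \pmod{t_i-t_j}$ for all $i<j\leq h(i)$, which is \eqref{6}. The only delicate point I would want to pin down carefully is the indexing convention: the tangent weight $\epsilon_i-\epsilon_j$ is labeled by the matrix position $(i,j)$ and is therefore common to all fixed points, whereas the transposition indexing the 2-sphere acts on the permutation $\sigma$; confirming that these two labelings agree (so that the same pair $(i,j)$ appears on both sides) is the main piece of bookkeeping. Once this is settled, the proposition follows immediately from Theorems \ref{34} and \ref{35}.
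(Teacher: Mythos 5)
Your proposal is correct and follows essentially the same route as the paper: invoke Theorem \ref{34} to certify $Y_h$ as a GKM manifold, identify the edge weight of the 2-sphere joining $\sigma$ and $\sigma\cdot(i,j)$ as $t_i-t_j$ (independent of $\sigma$, since it is attached to the $(i,j)$-matrix-entry direction by \eqref{24}), and then feed this data into Theorem \ref{35}. The only cosmetic point is that your attempt to ``re-verify'' condition (4) of Definition \ref{n9} from the local model is unnecessary (Theorem \ref{34} already asserts GKM-ness) and not quite a complete argument as phrased, but this does not affect the proof.
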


Recall that the $T$-equivariant cohomology of the Hessenberg variety $X_h$ admits a similar description in \cite{Tym} (see also \cite[\S8]{AHHM})
\beq\label{22} H^*_T(X_h)\cong \left \{(p_v)_{v\in S_n}: 
p_v\equiv p_{v\cdot (i,j)} \mathrm{~modulo~} t_{v(i)}-t_{v(j)} \ \ \forall  i<j\leq h(i)\right \}.\eeq
Comparing \eqref{6} with \eqref{22}, we find that the ring isomorphism
\beq \label{15} \xi: \prod_{v\in S_n}\Q[t_1,\cdots ,t_n]\lra \prod_{v\in S_n}\Q[t_1,\cdots ,t_n], \quad (p_v)_{v\in S_n}\mapsto (vp_v)_{v\in S_n}\eeq
restricts to an isomorphism 
\beq\label{n49}
\xi: H^*_T(Y_h)\lra H^*_T(X_h)\eeq 
of subrings \cite[p.16689]{AB} where $vp_v$ denotes the action of $v$ on $p_v$ by \eqref{n13}. 
Indeed, if $p_v\equiv p_{v\cdot (i,j)}$ modulo $t_i-t_j$, then 
\[vp_v-v\cdot (i,j)p_{v\cdot (i,j)}=v(p_v-p_{v\cdot (i,j)})+v(p_{v\cdot (i,j)}-(i,j)p_{v\cdot (i,j)})\equiv 0\]
modulo $v(t_i-t_j)=t_{v(i)}-t_{v(j)}$. Note that for monomials $f=t_1^{a_1}\cdots t_n^{a_n}$, 
\[f-(i,j)f=(t_i^{a_i}t_j^{a_j}-t_i^{a_j}t_j^{a_i})\prod_{k\neq i,j}t_k^{a_k}\equiv 0~\mathrm{~modulo~}~t_i-t_j.\] 

\begin{remark}
	The $T$-weights on the tangent spaces of the fixed points are well defined only up to sign. We use the sign choices in Theorem~\ref{34} (4). (See \cite[(10) and p.16687]{AB}.)
\end{remark}
\begin{remark}\label{n14}
	One can check that the isomorphism $\xi$ is in fact the natural one given by 
	$$H^*_T(Y_h)\cong H^*_{T\times T}(Z_h)\cong H^*_T(X_h)$$ 
	where $Z_h=p_2^{-1}(X_h)\subset U(n)$ in \eqref{n5} is acted on by $T\times T$ by left and right multiplications so that $Y_h=T\backslash Z_h$ and $X_h=Z_h/T$. 
	If we identify $S_n$ with the group of permutation matrices in $U(n)$ so that $TS_n=S_nT$, 
	one can easily check that \eqref{15} is given by the natural isomorphism 
	$$H^*_T(Y_h^T)\cong H^*_{T\times T}(TS_n)=H^*_{T\times T}(S_nT)\cong H^*_T(X_h^T),$$ 
	where $Y_h^T=T\backslash TS_n$ and $X_h^T=S_nT/T$.
\end{remark}

\subsection{An $S_n$-action on $H^*(Y_h)$} 
In \cite{Tym}, Tymoczko defined an action of $S_n$ on the cohomology of the Hessenberg variety $X_h$ whose Frobenius characteristic turns out to coincide with the chromatic quasisymmetric function \eqref{n11} by \cite{BC, GP2}.
Similarly, there is a natural action of $S_n$ on the cohomology of $Y_h$.

Let us first recall the dot action on $H^*(X_h)$. Consider the $S_n$-action on $H^*_T(X_h^T)\cong \bigoplus_{v\in S_n}\QQ[t_1,\cdots, t_n]$ defined by
\beq \label{19} (\mu,(p_v)_{v\in S_n})\mapsto (\mu p_{\mu^{-1}v})_{v\in S_n}, \quad \text{ for }~\mu\in S_n\eeq
where \beq\label{n13}
(\mu p)(t_1,\cdots,t_n):=p(t_{\mu(1)},\cdots t_{\mu(n)})\eeq 
for $p\in \QQ[t_1,\cdots, t_n]$. From \eqref{22}, it is straightforward to check that this action preserves the subring $H^*_T(X_h) \subset H^*_T(X_h^T)$. 
Hence we have an induced action of $S_n$ on $H^*_T(X_h)$ which in turn   
defines an $S_n$-action on 
\[ 
H^*(X_h)\cong H^*_T(X_h)/\mathfrak{m}H^*_T(X_h),\] 
as it preserves the submodule generated by $\mathfrak{m}:=(t_1,\cdots,t_n)$. This is called \emph{Tymoczko's dot action}.

\begin{definition} \cite{AB}
	By the isomorphism $\xi$ in \eqref{n49}, the dot action on $H^*_T(X_h)$ pulls back to an action of $S_n$ on $H^*_T(Y_h)$ defined by
\beq \label{1}(\mu,(p_v)_{v\in S_n})\mapsto (p_{\mu^{-1}v})_{v\in S_n} \quad \text{ for }~\mu\in S_n.\eeq
As this preserves $\mathfrak{m}H^*_T(Y_h)$, 
\eqref{1} defines an action of $S_n$ on 
\beq\label{n31}
H^*(Y_h)\cong H^*_T(Y_h)/\mathfrak{m}H^*_T(Y_h).\eeq
This is called the \emph{dagger action} in \cite{MS}.
\end{definition}

Using this dagger action, we have the following. 
\begin{definition} \label{30}
For a graded $S_n$-module $V=\bigoplus_{k\ge 0} V_k$, we define
$$\ch_q(V)=\sum_{k\ge 0} \ch(V_k) q^k\in \Lambda[q]$$
where  
	$\ch$ is the Frobenius characteristic from the ring of representations of symmetric groups $S_n$ onto the ring $\Lambda$ of symmetric functions \cite[\S I.7]{Mac}.
For a Hessenberg function $h:[n]\to [n]$, we define
	\[\cP(h):=\sum_{k\geq 0}\ch(H^{2k}(Y_h))q^k=\ch_q(H^{2*}(Y_h)) \in \Lambda[q].\]
\end{definition}

\begin{example}\label{33}
	Suppose $h(i)=n$ for all $1\leq i \leq n$ so that $Y_h$ is the isospectral manifold $Y\cong T\backslash U(n)$ in \eqref{n7} and $X_h$ is the flag variety $$X=\Fl(n)\cong U(n)/T.$$
	We identify $S_n$ with the group of permutation matrices in $U(n)$. 
	In this case, we have a left (resp. right) action of $S_n$ on $X$ (resp. $Y$) by left (resp. right) multiplication of permutation matrices. 
	
	Let $\cV_i$ denote the rank $i$ tautological vector bundle on the flag variety $X$. As the cohomology ring of $X$ is generated by the line bundles $\cV_i/\cV_{i-1}$ and the $S_n$ action preserves the line bundles, we find that the action of $S_n$ on $H^*(X)$ is trivial (cf.~\cite[Proposition 4.2]{Tym}, \cite[Example 2.12]{KL}). 
	As the action of $S_n$ on the equivariant line bundles $\cV_i/\cV_{i-1}$ permutes the equivariant weights, we find that the $T$-equivariant cohomology of $X$ is 
	$$H^*_T(X)\cong H^*(X)\otimes \QQ[t_1,\cdots,t_n]$$
	where $S_n$ acts trivially on $H^*(X)$ and by \eqref{n13} on $H^*_T=\QQ[t_1,\cdots,t_n]$.
	
	By Remark \ref{n14} and \eqref{15}, we find that 
	\beq\label{n17}
	H^*(Y)\otimes \QQ[t_1,\cdots,t_n]\cong H^*_T(Y)\cong H^*_T(X)\cong H^*(X)\otimes \QQ[t_1,\cdots,t_n] \eeq
	is $S_n$-equivariant where $S_n$ acts trivially on the left $\QQ[t_1,\cdots,t_n]$ and by \eqref{n13} on the right $\QQ[t_1,\cdots,t_n]$. 
	
	Applying the Frobenius characteristic to \eqref{n17}, we obtain 
	\beq\label{n15}
	\cK_n:=\ch_q(H^{2*}(Y))=(1-q)^n[n]_q! \ch (\QQ[t_1,\cdots,t_n])~\in \Lambda[q]\eeq
	because 
	$\ch_q(H^*(X))=[n]_q!$.
	From this, it follows that $\cK_n$ is uniquely determined by the inductive formula \eqref{13}. 
	Indeed, $f_n:=\ch_q (\QQ[t_1,\cdots,t_n])$ in \eqref{n15} satisfies 
		\[f_n=\frac{1}{1-q^n}\left(e_{1}f_{n-1}-e_2f_{n-2}+\cdots +(-1)^{n-1}e_nf_0\right), \qquad f_0:=1\]
	by Lemma~\ref{n21} (2) below. 
		\end{example}
		
Example \ref{33} immediately implies the following.
\begin{proposition}\label{n16}
The function $h\mapsto \cP(h)$ satisfies  Theorem~\ref{46} (1). 
\end{proposition}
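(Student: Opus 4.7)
The proposition is essentially a bookkeeping consequence of the content already assembled in Example \ref{33}, so my plan is to extract the remaining algebraic identity explicitly. The starting point is that when $h(i)=n$ for all $i$, we have $Y_h=Y\cong T\backslash U(n)$, so Definition \ref{30} gives $\cP(h)=\ch_q(H^{2*}(Y))$, and Example \ref{33} identifies this with
\[\cK_n=(1-q)^n[n]_q!\,f_n,\qquad f_n:=\ch_q(\QQ[t_1,\dots,t_n]).\]
So the task reduces to showing that this expression for $\cK_n$ satisfies the recursion \eqref{13}.

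The plan is then to substitute the recursion for $f_n$ provided by Lemma \ref{n21}(2), namely
\[f_n=\frac{1}{1-q^n}\sum_{i=1}^n(-1)^{i-1}e_i f_{n-i},\qquad f_0=1,\]
into the formula $\cK_n=(1-q)^n[n]_q!\,f_n$, and simplify. Using $1-q^n=(1-q)[n]_q$ and $[n]_q!=[n]_q[n-1]_q!$, the prefactor in the $i$-th term becomes
\[\frac{(1-q)^n[n]_q!}{(1-q^n)(1-q)^{n-i}[n-i]_q!}=\frac{(1-q)^{i-1}[n-1]_q!}{[n-i]_q!},\]
so that $f_{n-i}=\cK_{n-i}/\bigl((1-q)^{n-i}[n-i]_q!\bigr)$ turns the recursion into
\[\cK_n=\sum_{i=1}^n(-1)^{i-1}(1-q)^{i-1}\frac{[n-1]_q!}{[n-i]_q!}e_i\,\cK_{n-i}.\]
Recognizing $(-1)^{i-1}(1-q)^{i-1}=(q-1)^{i-1}$ produces exactly \eqref{13}, with the base case $\cK_0=1$ coming from $H^*(\text{pt})=\QQ$.

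There is no serious obstacle here: the geometric and representation-theoretic input (the identification of the $S_n$-action on $H^*(Y)$ with the permutation action on $\QQ[t_1,\dots,t_n]$ after twisting by $(1-q)^n[n]_q!$) is already encoded in Example \ref{33}, and the combinatorial recursion for $f_n$ is Lemma \ref{n21}(2). The only step that must be written out in the proof is the short algebraic manipulation above matching the two recursions. I would present it as a two-line calculation immediately after citing Example \ref{33} and Lemma \ref{n21}(2).
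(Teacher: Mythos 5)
Your argument is correct and is precisely the approach the paper takes: Example \ref{33} records $\cK_n=(1-q)^n[n]_q!\,f_n$ together with the $f_n$-recursion from Lemma \ref{n21}(2), and the paper leaves the substitution implicit ("Example \ref{33} immediately implies the following"). You have simply written out the short telescoping of prefactors, using $1-q^n=(1-q)[n]_q$ and $[n]_q!=[n]_q[n-1]_q!$, that the paper omits; the computation checks out.
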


The remaining two conditions in Theorem~\ref{46} will be proved in \S\ref{n18} and in \S\ref{S4}.  

\medskip

\begin{lemma}\label{n21}
	Let $f_n:=\ch(\QQ[t_1,\cdots,t_n])$ in \eqref{n15} with $f_0=1$. Then $\{f_n\}_{n\geq 0}$ satisfies the following: 
	\begin{enumerate}
		\item $ f_n=\sum_{i=0}^n q^{n-i}h_if_{n-i}$ ~ and 
		\item $q^nf_n=\sum_{i=0}^n(-1)^ie_if_{n-i}$
	\end{enumerate}
	where 
	$h_m=\sum_{i_1\leq \cdots \leq i_m}x_{i_1}\cdots x_{i_m} \in \Lambda$ is the $m$-th complete homogeneous symmetric function with $h_0=1$. 
\end{lemma}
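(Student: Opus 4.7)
The plan is to compute $f_n$ explicitly via the monomial orbit decomposition of $\QQ[t_1,\dots,t_n]$ under $S_n$, and then deduce both identities from a single generating function identity for $F(z) := \sum_{n\ge 0} f_n z^n$ in $\Lambda[q][[z]]$.

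First, I would decompose $\QQ[t_1,\dots,t_n]_k$ into $S_n$-orbits on its monomial basis. Each orbit corresponds to a multiset of size $n$ of nonnegative exponents, which I encode by a multiplicity sequence $(m_0,m_1,m_2,\dots)$ satisfying $\sum_{j\ge 0} m_j = n$ and $\sum_{j\ge 0} jm_j = k$. The stabilizer of any representative monomial in this orbit is $\prod_{j\ge 0} S_{m_j}$, acting trivially on the one-dimensional span, so the orbit representation is induced from the trivial representation of $\prod_j S_{m_j}$ and has Frobenius characteristic $\prod_j h_{m_j}$. Weighting by $q^k$ and summing over $k$ yields
$$f_n \;=\; \sum_{(m_j)\,:\,\sum_j m_j = n}\;\prod_{j\ge 0} q^{jm_j}\,h_{m_j}.$$

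Second, interchanging the sums and using $z^n = \prod_j z^{m_j}$ factorizes the generating function as
$$F(z) \;=\; \prod_{j\ge 0} H(zq^j), \qquad H(z) \;:=\; \sum_{m\ge 0} h_m z^m.$$
Separating the $j=0$ factor gives the functional equation $F(z) = H(z)\cdot F(qz)$; extracting the coefficient of $z^n$ on both sides is precisely the recursion in part (1). For part (2), I would invoke the classical duality $H(z)E(-z) = 1$, where $E(z) := \sum_m e_m z^m$, to rewrite the same functional equation as $F(qz) = E(-z)\cdot F(z)$, and then read off the coefficient of $z^n$.

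The only delicate point is the bookkeeping in the orbit decomposition: one must include the multiplicity $m_0$ of the zero exponent so that the $m_j$'s sum to exactly $n$, even though $h_{m_0}$ carries no $q$-weight — it is precisely this $j=0$ factor that one separates to produce the functional equation. Once the product formula for $F(z)$ is in place, both identities reduce to the same functional equation, combined with the standard $H$--$E$ duality between complete and elementary symmetric functions.
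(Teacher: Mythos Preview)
Your proof is correct and takes a genuinely different route from the paper's.

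The paper proves (1) by a coarser decomposition: it splits $\QQ[t_1,\dots,t_n]$ into the submodule $W_n$ spanned by monomials missing at least one variable and its complement $W_n'$ (monomials using all $n$ variables), observes that $\ch_q(W_n')=q^nf_n$, and then decomposes $W_n$ further by the number $i$ of missing variables to get $(1-q^n)f_n=\sum_{i=1}^n h_i\,q^{n-i}f_{n-i}$. For (2), the paper then runs a direct induction on $n$, substituting (1) and the Newton-type identity $h_n=\sum_{j=1}^{n-1}(-1)^j h_{n-j}e_j+(-1)^{n-1}e_n$ and matching terms.

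You instead carry out the full monomial-orbit decomposition to obtain the closed product formula $F(z)=\prod_{j\ge 0}H(zq^j)$, and then read off both recursions from the single functional equation $F(z)=H(z)F(qz)$, the second via $H(z)E(-z)=1$. Your approach is more uniform---(1) and (2) become the same identity viewed from opposite sides of the $H$--$E$ duality---and it also yields the explicit plethystic expression for $f_n$ as a by-product. The paper's argument, on the other hand, avoids generating functions and infinite products entirely and stays at the level of finite sums, which makes it slightly more self-contained but at the cost of the somewhat laborious induction for (2).
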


\begin{proof}
	(1) Let $W_n$ (resp. $W_n'$) be the submodule in $\QQ[t_1,\cdots,t_n]$ in \eqref{n15} spanned by monomials generated by less than (resp. precisely) $n$ variables. Then, we have
	\[\QQ[t_1,\cdots,t_n]=\bigoplus_{k\geq0}(t_1\cdots t_n)^kW_n=W_n\oplus W_n'\] 
	Hence $f_n=\frac{1}{1-q^n}\ch_q (W_n)=\ch_q(W_n)+\ch_q(W_n')$. In particular,
	\beq \label{n23} \ch_q(W_n)=(1-q^n)f_n \and \ch_q(W_n')=q^nf_n.\eeq
	Moreover, $W_n$ admits a decomposition by the number of generating variables
	\[W_n=\bigoplus_{i=1}^n\Ind^{S_n}_{S_i\times S_{n-i}}W_{n-i}'\]
	where $\Ind^{S_n}_{S_i\times S_{n-i}}W'_{n-i}$ is the induced representation of $W_{n-i}'$ as an $S_i\times S_{n-i}$ representation with a trivial $S_i$-action. Since $\ch$ is multiplicative with respect to the multiplication on representations of symmetric groups given by taking the induced representation of tensor products (\cite[I,~(7.3)]{Mac}), we have
	\[(1-q^n)f_n=\ch_q(W_n)=\sum_{i=1}^nh_i\ch_q(W_{n-i}')=\sum_{i=1}^n q^{n-i}h_if_{n-i}\]
	by \eqref{n23}. This proves (1).
	
	(2) We use (1) and a well-known identity (\cite[I,~(2.6)]{Mac})
	\beq\label{n24} h_n=(-1)^{n-1}e_n +\sum_{j=1}^{n-1}(-1)^jh_{n-j}e_j \quad \text{ for }n\geq2 \and h_1=e_1,
	\eeq
	together with induction on $n$. 
	
	When $n=0$, (2) trivially holds since $q^0f_0=1=e_0f_0$. Assume that $n\geq 1$ and (2) holds for every $m<n$, so that
	\beq \label{n26} q^mf_m=f_m+\sum_{j=1}^m(-1)^je_jf_{m-j}.\eeq
	Then, by the assertion (1), \eqref{n26} and \eqref{n24}, we have
	\[\begin{split}
		&(1-q^n)f_n=\sum_{i=1}^n(q^{n-i}f_{n-i})h_i=\sum_{i=1}^nh_if_{n-i}+\sum_{i=1}^n\sum_{j=1}^{n-i}(-1)^jh_ie_jf_{n-i-j}\\
		&=\sum_{i=1}^n(-1)^{i-1}e_if_{n-i} +\sum_{i=2}^n\sum_{j=1}^{i-1}(-1)^{j-1}h_{i-j}e_jf_{n-i}+\sum_{i=1}^n\sum_{j=1}^{n-i}(-1)^jh_ie_jf_{n-i-j}\\
		&=\sum_{i=1}^n(-1)^{i-1}e_if_{n-i}
	\end{split}\]
	where the last equality holds by 
	\[\sum_{i=2}^n\sum_{j=1}^{i-1}(-)=\sum_{k,j\geq 1, k+j\leq n}(-)=\sum_{k=1}^n\sum_{j=1}^{n-k}(-)\]
	with $i=j+k$. This proves the assertion (2).
\end{proof}

A by-product of Lemma~\ref{n21} is an elementary proof of the following. 

\begin{corollary} \cite[Lemma~5.0.1~(2)]{MS}\label{n27}
Let $\w:\Lambda\cong \Lambda$ be the involution of the ring $\Lambda$ which interchanges $e_n$ and $h_n$. 
The we have 
	$$f_n(q^{-1})=(-q)^n (\w f_n)(q).$$
\end{corollary}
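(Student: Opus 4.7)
The plan is to proceed by induction on $n$, leveraging the two recursions in Lemma~\ref{n21} as a dual pair. The base case $n=0$ is immediate since $f_0=1$.

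For the inductive step, introduce the auxiliary symmetric function $g_n(q):=(-q)^n(\omega f_n)(q)$; the goal is to show $g_n(q)=f_n(q^{-1})$, assuming the analogous identity for all $m<n$. The key observation is that the operation $\omega$ combined with $q\mapsto q^{-1}$ essentially interchanges Lemma~\ref{n21}(1) with Lemma~\ref{n21}(2), so applying these two moves to the two halves of Lemma~\ref{n21} should produce the same object from opposite sides. Concretely, apply $\omega$ to Lemma~\ref{n21}(1) (noting that $\omega$ is a $\QQ[q]$-linear ring involution with $\omega h_i=e_i$) to obtain a recursion for $\omega f_n$ in terms of $e_i$ and $\omega f_{n-i}$. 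Multiplying by $(-q)^n$ and rewriting $\omega f_{n-i}$ via $g_{n-i}$, then invoking the inductive hypothesis $g_{n-i}(q)=f_{n-i}(q^{-1})$, rearranges to
\[
(1-q^n)\,g_n(q) \;=\; q^n\sum_{i=1}^n(-1)^{i}\,e_i\,f_{n-i}(q^{-1}).
\]

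On the other hand, substituting $q\mapsto q^{-1}$ directly into Lemma~\ref{n21}(2), separating the $i=0$ term, and multiplying through by $q^n$ yields
\[
q^n\sum_{i=1}^n(-1)^{i}\,e_i\,f_{n-i}(q^{-1}) \;=\; (1-q^n)\,f_n(q^{-1}).
\]
Comparing these two displays gives $(1-q^n)g_n(q)=(1-q^n)f_n(q^{-1})$, and cancelling the nonzero polynomial $1-q^n$ in $\Lambda[q,q^{-1}]$ completes the induction.

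I do not anticipate any conceptual obstacle: the whole argument is an induction built on the symmetry between the two recursions of Lemma~\ref{n21} under $\omega$ and $q\mapsto q^{-1}$. The one delicate point is bookkeeping — carefully tracking the signs $(-1)^i$, the powers $q^{n-i}$, and the factors $(-q)^{n-i}$ that appear when the inductive hypothesis is substituted — but once the correct combination is set up, the two sides match on the nose.
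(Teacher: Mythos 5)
Your proof is correct and follows essentially the same strategy as the paper: induction on $n$ using the interplay between the two recursions of Lemma~\ref{n21} under $\omega$ and $q\mapsto q^{-1}$. The only cosmetic difference is that you apply $\omega$ to Lemma~\ref{n21}~(1) and substitute $q\mapsto q^{-1}$ into part~(2), whereas the paper applies $\omega$ to part~(2) and substitutes $q\mapsto q^{-1}$ into part~(1); these two routes are mirror images of one another and the bookkeeping works out identically in both.
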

\begin{proof}
	Let $f'_n:=\w f_n$ for $n\geq0$. We use induction on $n$. The assertion trivially holds for $n=0$. 
	Assume that $n\geq 1$ and $f_m(q^{-1})=(-q)^mf'_m(q)$ for $m<n$.
	By applying $\w$ to Lemma~\ref{n21}~(2), we have
	\[\begin{split}
		(1-q^n)\w f_n(q)&=(1-q^n)f_n'(q)=\sum_{i=1}^n(-1)^{i-1}h_if_{n-i}'(q)\\
		&=(-1)^{n-1}\sum_{i=1}^nq^{-(n-i)}h_if_{n-i}(q^{-1})\\
		&=(-1)^{n-1}(1-q^{-n})f_n(q^{-1})\\
		&=(-q)^{-n}(1-q^n)f_n(q^{-1})
	\end{split}\]
	where the third and the fourth equalities hold by the induction hypothesis and Lemma~\ref{n21}~(1) respectively. Therefore the corollary holds for all $n$.
	\end{proof}

Corollary \ref{n27} is a key ingredient in the proof of the parlindromicity of $\cP(h)$ up to the involution $\w$ by Masuda and Sato in \cite{MS} which says that 
\beq\label{r2} \begin{split}
	&H^*(Y_h)\cong H^{\dim_\RR Y_h-2*}(Y_h)\otimes \mathrm{sgn} \quad \text{ or equivalently, }\\
	&\ch_q(H^{2*}(Y_h))=\w\,\ch_q(H^{\dim_\RR Y_h-2*}(Y_h))
\end{split}\eeq
where $\mathrm{sgn}$ denotes the sign representation.
In fact, this parlindromicity \eqref{r2} follows from Corollary~\ref{n27} and the modular law (Theorem \ref{2}) as follows: First one can immediately check \eqref{r2} for the isospectral manifold $Y$ in \eqref{n7} using Corollary~\ref{n27} and \eqref{n15}. Next the modular law enables us to deduce \eqref{r2} for $Y_h$ from the palindromicity for $Y$.

\begin{remark}
	Complete homogeneous symmetric functions $h_n$ are used only in Lemma~\ref{n21} and the proof of Corollary~\ref{n27}. We hope these not to be confused with Hessenberg functions.
\end{remark}
	
	\bigskip

\subsection{Connectedness and multiplicativity}\label{n18}
In this subsection, we give a criterion for connectedness of twin manifolds, and check the multiplicative property of the function $\cP$ in Definition \ref{30}. 
\begin{lemma}
	For a Hessenberg function $h:[n]\to [n]$ with $n\ge 2$, the twin manifold $Y_h$ is connected if and only if $h(i)>i$ for all $1\leq i<n$. If $h(j)=j$ for some $j<n$, then $Y_h$ is the disjoint union of $\binom{n}{j}$ copies of $Y_{h'}\times Y_{h''}$ for $h'$ and $h''$ defined as in Theorem~\ref{46} (2). 
\end{lemma}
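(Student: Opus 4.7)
The plan is to split the lemma into two: first the block-diagonal decomposition in the presence of a fixed point, which immediately disconnects $Y_h$, and then a direct GKM-theoretic proof of connectedness when no fixed point exists.

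Suppose $h(j) = j$ for some $1 \leq j < n$. Because $h$ is nondecreasing, $h(k) \leq h(j) = j$ for every $k \leq j$, so the staircase description \eqref{25} makes $y_{ik} = 0$ whenever $k \leq j < i$. Thus each $y \in Y_h$ takes the block-diagonal form $y = \diag(A,B)$, with $A$ a Hermitian $j \times j$ top-left block and $B$ a Hermitian $(n-j) \times (n-j)$ bottom-right block. The residual staircase constraints on $A$ and $B$ are exactly those imposed by $h'$ on $[j]$ and by $h''(i) := h(i+j)-j$ on $[n-j]$, while the spectrum $\{\lambda_1,\dots,\lambda_n\}$ of $y$ splits into the disjoint union of the spectra of $A$ and $B$. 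Stratifying $Y_h$ by the size-$j$ subset $S \subset \{\lambda_1,\dots,\lambda_n\}$ chosen as the spectrum of $A$ yields
\[
Y_h \;=\; \bigsqcup_{|S|=j} Y_{h'}(S)\times Y_{h''}(\{\lambda_1,\dots,\lambda_n\}\setminus S),
\]
and the $\binom{n}{j}$ pieces are genuinely disjoint because $S$ is a locally constant invariant. By \cite[Theorem 3.5]{AB} each summand is diffeomorphic to $Y_{h'} \times Y_{h''}$, proving the second assertion and, since $\binom{n}{j}\ge 2$, the ``only if'' direction of the connectedness criterion.

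For the converse I would assume $h(i) > i$ for all $i < n$ and apply the GKM description in Proposition~\ref{n32}. Since $H^*(Y_h) \cong H^*_T(Y_h)/\mathfrak{m}H^*_T(Y_h)$ and the augmentation ideal $\mathfrak{m}$ sits in strictly positive degrees, $H^0(Y_h)$ coincides with the degree-zero part of $H^0_T(Y_h)$, which by \eqref{6} consists of tuples $(c_v)_{v\in S_n}$ of rational constants satisfying $c_v \equiv c_{v\cdot(i,j)}$ modulo $t_i - t_j$ for every $i < j \leq h(i)$. On constants this congruence collapses to equality, so $\dim H^0(Y_h)$ equals the number of orbits of the subgroup $G_h := \langle (i,j) : i < j \leq h(i) \rangle \leq S_n$ acting on $S_n$ by right multiplication. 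Under the hypothesis every adjacent transposition $(i,i{+}1)$ with $i < n$ lies in $G_h$, hence $G_h = S_n$, there is a single orbit, and $Y_h$ is connected.

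The step that takes the most care is the translation ``connected $\Longleftrightarrow$ the $G_h$-action on $S_n$ is transitive'': it relies on Proposition~\ref{n32}, the collapse of the GKM congruences to equalities on constants, and the identification $H^0(Y_h) = H^0_T(Y_h)$. Everything else is a careful reading of the staircase description \eqref{25} combined with the standard fact that the adjacent transpositions generate $S_n$. An inductive argument using the fibration/blowup structure of the roof manifolds in \S\ref{3} would also give the result, but is far heavier than needed for this purely structural fact.
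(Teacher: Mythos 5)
Your block-diagonal decomposition is exactly the paper's one-line proof: the staircase description \eqref{25} forces $y = \diag(A,B)$ when $h(j)=j$, and stratifying by the spectrum of $A$ gives the $\binom{n}{j}$ disjoint copies of $Y_{h'}\times Y_{h''}$. This settles the decomposition and, by contrapositive, the ``only if'' direction, just as in the paper. Where you go beyond the paper is the ``if'' direction: the paper's proof stops at the diffeomorphism and leaves connectedness when $h(i)>i$ for all $i<n$ unaddressed, whereas you supply a clean GKM argument. That argument is sound: by equivariant formality (Theorem~\ref{34}(1) and \eqref{n31}) the degree-zero part of $H^*_T(Y_h)$ equals $H^0(Y_h)$, and on constants the congruences in \eqref{6} collapse to equalities, so $\dim H^0(Y_h)$ counts the orbits of $G_h=\langle (i,j): i<j\le h(i)\rangle$ on $S_n$; your hypothesis puts every adjacent transposition in $G_h$, forcing a single orbit. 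This buys you a self-contained proof of both directions at the cost of invoking Proposition~\ref{n32}, whereas the paper's terse proof relies on the reader accepting connectedness as evident (e.g.\ from the same GKM picture, or from connectedness of the moment graph of Theorem~\ref{34}(3)). Both are correct; yours is the more careful reading.
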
 
\begin{proof}
	This is given by the explicit diffeomorphism 
	\beq \label{49} Y_h\cong \bigsqcup_{I\subset[n], ~\lvert I\rvert=j} Y_{h',I}\times Y_{h'', I^c}, \quad \begin{pmatrix}
		A' &O\\ O&A''
	\end{pmatrix}\leftrightarrow (A',A'')\eeq
	where $Y_{h',I}\cong Y_{h'}$ denotes the twin manifold with spectrum $\{\lambda_i\,|\,i\in I\}$ and $Y_{h'',I^c}\cong Y_{h''}$ is defined similarly. 
\end{proof}
\begin{proposition}\label{58}
The function $h\mapsto \cP(h)$ satisfies Theorem~\ref{46} (2).
\end{proposition}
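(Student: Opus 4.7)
The plan is to combine the disjoint-union decomposition \eqref{49} from the preceding lemma with the K\"unneth formula and the multiplicativity of the Frobenius characteristic under induction from a Young subgroup \cite[I,~(7.3)]{Mac}.

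First, I would apply K\"unneth to each component of \eqref{49} and, choosing the natural order on $I$ and $I^c$, identify each summand with the abstract product to obtain
\[H^*(Y_h)\;\cong\;\bigoplus_{\substack{I\subset[n]\\|I|=j}}H^*(Y_{h',I})\otimes H^*(Y_{h'',I^c})\;\cong\;\bigoplus_{\substack{I\subset[n]\\|I|=j}}H^*(Y_{h'})\otimes H^*(Y_{h''})\]
as graded vector spaces.

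Next, I would trace the dagger $S_n$-action through this decomposition using the GKM description \eqref{6} and the action formula \eqref{1}. A $T$-fixed point $v\in Y_h^T\cong S_n$ is block-diagonal with top-left spectrum $\{\lambda_i:i\in v(\{1,\dots,j\})\}$, so it lies in the component indexed by $I=v(\{1,\dots,j\})$. The action $(p_v)_v\mapsto(p_{\mu^{-1}v})_v$ in \eqref{1} sends a class supported on the indices $\{v:v(\{1,\dots,j\})=I\}$ to one supported on $\{v:v(\{1,\dots,j\})=\mu(I)\}$, so $S_n$ permutes the summands of the decomposition above via $I\mapsto\mu(I)$. The stabilizer of $I_0=\{1,\dots,j\}$ is the Young subgroup $S_j\times S_{n-j}$, and on the $I_0$-summand the element $(\mu_1,\mu_2)\in S_j\times S_{n-j}$ acts on fixed points $(v_1,v_2)\in S_j\times S_{n-j}$ by left multiplication factor by factor, which is precisely the tensor product of the dagger actions on $H^*(Y_{h'})$ and $H^*(Y_{h''})$.

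Consequently,
\[H^*(Y_h)\;\cong\;\Ind_{S_j\times S_{n-j}}^{S_n}\bigl(H^*(Y_{h'})\otimes H^*(Y_{h''})\bigr)\]
as graded $S_n$-modules. Taking the Frobenius characteristic and invoking \cite[I,~(7.3)]{Mac}, which gives the multiplicativity of $\ch$ under induction from a Young subgroup, yields $\cP(h)=\cP(h')\cP(h'')$, establishing condition (2) of Theorem \ref{46}. The main obstacle is the bookkeeping step verifying that the purely algebraic dagger action \eqref{1} is compatible with the geometric decomposition \eqref{49}: concretely, that at the fixed-point level $v\mapsto\mu v$ induces the component permutation $I\mapsto\mu(I)$ and that the stabilizer action on the identity component factors as the tensor product of the dagger actions on $H^*(Y_{h'})$ and $H^*(Y_{h''})$. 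Once this matching is in place, the remaining ingredients—K\"unneth and multiplicativity of $\ch$—are formal.
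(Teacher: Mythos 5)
Your proposal is correct and follows essentially the same route as the paper: decompose $Y_h$ (and hence $Y_h^T\cong S_n$) by the subset $I=v([j])$, recognize the summand structure as an induced representation from $S_j\times S_{n-j}$, and apply multiplicativity of the Frobenius characteristic. The paper carries out your ``bookkeeping step'' entirely at the level of $T$-equivariant cohomology via the GKM description, introducing explicit coset representatives $w_I$ to identify $Y_{h',I}^T\times Y_{h'',I^c}^T$ with $S_j\times S_{n-j}$ before passing to ordinary cohomology by quotienting out $\mathfrak m$, which is a cleaner way to make precise the ``choosing the natural order on $I$ and $I^c$'' step you rightly flagged as the main point requiring care.
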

\begin{proof}
	Under the identifications \eqref{25} and \eqref{n10}, the isomorphism \eqref{49} restricts to an isomorphism on $T$-fixed point sets
	\[Y_h^T\cong S_n =\bigsqcup_{I\subset[n],~\lvert I\rvert =j}(S_I\times S_{I^c})w_I\cong \bigsqcup_{I\subset [n], ~\lvert I\rvert =j}Y_{h',I}^T\times Y_{h'',I^c}^T\]
	where $w_I$ is an element of $S_n$ sending $[j]$ and $[j]^c$ to $I$ and $I^c$ respectively such that $w_I|_{[j]}$ and $w_I|_{[j]^c}$ are increasing. 
	This induces an isomorphism 
	\[\bigoplus_{I\subset [n],~ \lvert I\rvert=j}H^*_T(Y_{h',I}^T)\otimes_{H^*_T}H^*_T(Y_{h'',{I^c}}^T)\xrightarrow{~\cong~}H^*_T(Y_{h}^T)\]
	sending $(p_{v_I})_{v_I\in S_I}\otimes (p_{v_{I^c}})_{v_{I^c}\in S_{I^c}}$ to $(p_v)_{v\in S_n}$ with $p_v=p_{v_I}p_{v_{I^c}}$ 
	if 
	$v=v_Iv_{I^c}w_I$
	and $p_v=0$ otherwise for each $I$. This is equivariant under the induced actions of $S_I\times S_{I^c}$ and $S_n$ from \eqref{1} 
	for each component.
	Furthermore, one can easily see that the above isomorphism restricts to an isomorphism
	\[\bigoplus_{I\subset [n],~ \lvert I\rvert=j}H^*_T(Y_{h',I})\otimes_{H^*_T}H^*_T(Y_{h'',{I^c}})\xrightarrow{~\cong~}H^*_T(Y_{h})\]
	which exhibits $H^*_T(Y_h)$ as the induced representation of the $S_j\times S_{n-j}$-representation $H^*_T(Y_{h'})\otimes_{H^*_T} H^*_T(Y_{h''})$. Taking quotients by the submodules generated by $\mathfrak{m}$, $H^*(Y_h)$ is the induced representation of the $S_j\times S_{n-j}$-representation $H^*(Y_{h'})\otimes H^*(Y_{h''})$. 
	Since the Frobenius characteristic $\ch$ is multiplicative with respect to 
	tensor products (cf. \cite[I,~(7.3)]{Mac}),
	we have $$\cP(h)=\cP(h')\cP(h'')$$ as desired. 
\end{proof}

By comparing the twin manifolds geometrically, 
we will prove below that the last condition (3) in Theorem \ref{46} is also satisfied for $\cP(h)$ (cf.~Theorem \ref{2} below) and hence give a direct proof of the LLT-SW conjecture (cf.~Theorem \ref{48}). 

\bigskip

\section{Geometry of  twin manifolds}\label{3}
In this section, we compare the twin manifolds associated to a modular triple $\bh=(h_-,h,h_+)$ of Hessenberg functions in Definition \ref{59}. 
More precisely, we construct a manifold $\tY_\bh$, called the \emph{roof manifold} of $\bh$, together with maps
$$\xymatrix{&\tY_\bh \ar[ld]_-{\pi}\ar[rd]^-{\pr_2}& \\ Y_{h_+}&&\PP^1}$$
where $\pr_2$ is a smooth fibration with fiber $Y_h$ and $\pi$ is the \emph{blowup} along the submanifold $Y_{h_-}$ of complex codimension 2. 
The modular law  
\beq\label{n30}
H^{2k}(Y_h)\oplus H^{2k-2}(Y_h)\cong H^{2k}(Y_{h_+})\oplus H^{2k-2}(Y_{h_-}).\eeq
for $\cP(h)$ then follows immediately from the \emph{blowup formula} for $\pi$  and the spectral sequence for $\pr_2$.  

\medskip

\subsection{Roof of a triple} 

In this subsection, we define the roof $\tY_\bh$ of a triple $\bh=(h_-,h,h_+)$. 

For $I=[a,b]=\{a,a+1,\cdots,b\} \subset [n]$, let
\[\iota_I:U(b-a+1)\lra U(n)\]
be the embedding
\[A~\mapsto ~\begin{pmatrix}
	I_{a-1} &&\\&A&\\&&I_{n-b}
\end{pmatrix}\]
where $I_k$ denotes the $k\times k$ identity matrix for each $k$. When $a=b$, let $\iota_a:=\iota_I$. For disjoint $I=[a,b]$ and $J=[c,d]\subset [n]$, we denote by
\[\iota_I\times \iota_J:U(b-a+1)\times U(d-c+1)\lra U(n)\]
the map $(A,B)\mapsto \iota_I(A) \iota_J(B)$ given by the matrix multiplication.

\begin{definition}\label{43}
	Given a Hessenberg function $h:[n]\to [n]$, consider a triple $\bh=(h_-,h,h_+)$ of Hessenberg functions 
	defined by either of the following.
	\begin{enumerate}
		\item When $h(j)=h(j+1)$ and $h^{-1}(j)=\{j_0\}$ for some $1\leq j_0<j<n$, let $1\leq r\leq n-j $ be any integer such that
		\[h(j)=\cdots =h(j+r) \and h^{-1}(\{j+1,\cdots, j+r-1\})=\emptyset.\] 
		Let $h_-, h_+:[n]\to [n]$ be defined by
		\[h_-(i)=\begin{cases}
			j-1 & \text{for }i=j_0\\
			h(i) &\text{otherwise}
		\end{cases} 
		\and h_+(i)=\begin{cases}
			j+r &\text{for }i=j_0\\
			h(i) & \text{otherwise.}
		\end{cases} \]
		In this case, we define
		\begin{equation*}
			\begin{split}
				\tY_\bh :=Y_h\times _{U(1)\times U(r)}U(r+1)& \quad \text{ and}\\
				E_\bh:=Y_{h_-}\times_{U(1)\times U(r)}U(r+1)&
			\end{split}
		\end{equation*}
		to be the quotients of $Y_{h}\times U(r+1)$ and $Y_{h_-}\times U(r+1)$ by the free actions of $U(1)\times U(r)$ given by
		\beq \label{36} (Tg,A).B=(Tg\iota(B),\iota'(B)^{-1}A)\eeq 
		for $A\in U(r+1)$ and $B\in U(1)\times U(r)$,
		via 
		\begin{equation*}
			\begin{split}
				&\iota=\iota_j\times \iota_{[j+1,j+r]}:U(1)\times U(r)\hookrightarrow U(n)\quad \text{ and}\\
				&\iota'=\iota_1\times \iota_{[2,r+1]}:U(1)\times U(r)\hookrightarrow U(r+1).
			\end{split}
		\end{equation*}
		\item When $h(j)+1=h(j+1)\neq j+1$ and $h^{-1}(j)=\emptyset$ for some $j<n$, let $1\leq r\leq j$ be any integer such that 
		\[ h(j-r+1)=\cdots =h(j) \and h^{-1}(\{j-r+1,\cdots, j\}=\emptyset.\]
		Let $h_-,h_+ :[n]\to [n]$ be defined by 
		\[h_-(i)=\begin{cases}
			h(j) &\text{for }i=j+1\\
			h(i) &\text{otherwise}
		\end{cases} 
		\and h_+(i)=\begin{cases}
			h(j)+1 &\text{for }j-r<i\leq j\\
		h(i) &\text{otherwise.}
		\end{cases}\]
		In this case, similarly we define 
		\begin{equation*}
			\begin{split}
				\tY_\bh :=Y_h\times_{U(r)\times U(1)}U(r+1)& \quad \text{ and}\\
				E_\bh:=Y_{h_-}\times_{U(r)\times U(1)}U(r+1)&
			\end{split}
		\end{equation*}
		given by the actions \eqref{36} for $B\in U(r)\times U(1)$,
		via 
		\begin{equation*}
			\begin{split}
				&\iota=\iota_{[j-r+1,j]}\times \iota_{j+1}:U(r)\times U(1)\hookrightarrow U(n) \quad \text{ and}\\
				&\iota'=\iota_{[1,r]}\times \iota_{\{r+1\}}:U(r)\times U(1)\hookrightarrow U(r+1).
			\end{split}
		\end{equation*} 
	\end{enumerate}
	We call a triple $\bh=(h_-,h,h_+)$ in (1) and (2) a triple of \emph{type (1)} and \emph{(2)} respectively.
\end{definition}
\begin{remark}\label{r1}
Note that when $r=1$, $\bh$ is a modular triple in Definition~\ref{59}.
Also note that triples of type (1) and (2) with the same $r$ are dual to each other via the involution map $h\mapsto h^t$ on the set of Hessenberg functions, where for a Hessenberg function $h:[n]\to[n]$, its transpose $h^t:[n]\to [n]$ is defined by
\[h^t(i)=n-i_{\max}, \quad  i_{\max}=\max\{j\in [n]:h(j)<n+1-i\}.\]
One can easily see that $h$ is a triple of type (1) (resp. (2)) if and only if $h^t$ is a triple of type (2) (resp. (1)). 
\end{remark}

\begin{example}\label{8}
	Let $h=(2,5,5,5,6,6)$ where we write $h=(h(1),\cdots, h(n))$.
	Then $\bh=(h_-,h,h_+)$ with 
	\[h_-=(1,5,5,5,6,6) \and h_+=(4,5,5,5,6,6)\]
	 is a triple of type (1) with $(j_0,j,r)=(1,2,2)$. Similarly, $\bh=(h_-,h,h_+)$ with 
	 \[h_-=(2,5,5,5,5,6) \and h_+=(2,5,6,6,6,6)\]
	 is a triple of type (2) with $(j,r)=(4,2)$.	 
\end{example}

\medskip

\subsection{Maps from the roof}
In this subsection, we show that for a triple $\bh=(h_-,h,h_+)$ in Definition \ref{43}, \begin{enumerate}
\item[(* )] $\tY_\bh$ is the blowup of $Y_{h_+}$ along $Y_{h_-}$ with exceptional divisor $E_\bh$  and 
\item[(**)] $\tY_\bh$ is a fiber bundle over $\PP^r$ with fiber $Y_h$.
\end{enumerate} 
These are our \emph{geometric relations} among the twin manifolds  
that will give us the modular law \eqref{n30}. 

\medskip

To prove (*) and (**), we have to construct maps among the manifolds $\tY_\bh$, $E_\bh$, $Y_{h_+}$, $Y_{h_-}$ and $\PP^r$.  
As \beq\label{n25}
 \left(U(1)\times U(r)\right)\backslash U(r+1)\cong \PP^r \cong \left(U(r)\times U(1)\right)\backslash U(r+1),\eeq 
the second projection 
$$Y_h\times U(r+1)\lra U(r+1), \quad (\text{resp. } Y_{h_-}\times U(r+1)\lra U(r+1) )$$ 
induces the fiber bundle  
\beq \label{9} \pr_2:\tY_{\bh}\lra \PP^r, \quad (\text{resp. } E_\bh\lra \PP^r )\eeq
whose fibers are $Y_h$ (resp. $Y_{h_-}$). The obvious inclusion $Y_{h_-}\subset Y_h$ induces the inclusion 
	\beq\label{n19}\jmath:E_\bh\hookrightarrow \tY_\bh\eeq
of fiber bundles over $\PP^r$. 
Moreover, we have the map  
	\beq\label{n20} \pi:\tY_\bh \lra Y_{h_+}, \quad [Tg,A]\mapsto Tg\iota(A)\eeq 
	for $\iota=\iota_{[j,j+r]}$ (resp. $\iota=\iota_{[j-r+1,j+1]}$) if $\bh$ is of type (1) (resp. type (2)). Similarly, we have the map  
	\beq\label{n22} \pi_-: E_\bh\lra Y_{h_-}, \quad [Tg,A]\mapsto Tg\iota(A).\eeq 
	These are well defined, since $Y_{h_+}$ and $Y_{h_-}$ are invariant under the right multiplication of $\iota(U(r+1))$ by \eqref{25}.

Using the notation of \eqref{23}, let $f_\bh:Y_{h_+}\to \C^{r+1}$ be a map defined by 
	\begin{equation*}
		f_\bh(y)=\begin{cases}
			(f_{j_0,j}(y),~\cdots ,~ f_{j_0,j+r}(y)) &\text{for type (1)}\\
			(f_{h(j+1),j-r+1}(y),~\cdots ,~ f_{h(j+1),j+1}(y)) &\text{for type (2)}
		\end{cases}
	\end{equation*}
	so that 
	\beq \label{12} Y_{h_-}=\{y\in Y_{h_+}:f_\bh(y)=0\}\eeq
	is the transversal vanishing locus of $f_\bh$ 
	by \eqref{25} and \eqref{23}. This induces a map 
	\beq  \label{11} Y_{h_+}-Y_{h_-}\lra  \PP^r, \qquad y\mapsto [f_\bh(y)].\eeq

\begin{example}
	Let $\bh$ be as in Example~\ref{8} (1). Then, we have 
	\begin{equation*}
		\begin{split}
			Y_{h}&=\{y\in Y_{h_+}:f_{13}(y)=f_{14}(y)=0\} \quad \text{ and}\\
			Y_{h_-}&=\{y\in Y_{h_+}:f_{12}(y)=f_{13}(y)=f_{14}(y)=0\}.
		\end{split}
	\end{equation*}
	The map \eqref{11} is given by
	\[Y_{h_+}-Y_{h_-}\lra \PP^{2}, \quad y\mapsto [f_{12}(y):f_{13}(y):f_{14}(y)].\]
	\end{example}

The following propositions illustrate the geometry of  $Y_h$ for a triple $\bh$ which is very similar to that of a triple of Hessenberg varieties $X_h$ studied in \cite[\S3.3]{KL}, via blowups and projective bundles. 

\begin{proposition}\label{40}
	Let $\bh=(h_-,h,h_+)$ be a triple in Definition~\ref{43}. Then, \eqref{9} and \eqref{n22} give us a diffeomorphism
	\beq \label{44} 
		(\pi_-,\pr_2):E_\bh \xrightarrow{~\cong~} Y_{h_-}\times \PP^r.\eeq
\end{proposition}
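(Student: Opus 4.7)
The strategy is to exhibit an explicit inverse of $(\pi_-,\pr_2)$ by recognizing $E_\bh$ as an associated bundle that trivializes. Specifically, I would view $E_\bh=(Y_{h_-}\times U(r+1))/(U(1)\times U(r))$ as the bundle over $\PP^r$ associated to the principal $(U(1)\times U(r))$-bundle $U(r+1)\to\PP^r$ via the right action of $U(1)\times U(r)$ on $Y_{h_-}$ by $y\cdot B=\iota(B)^{-1}y\iota(B)$. Such an associated bundle is automatically trivial as soon as its fiberwise structure-group action extends to the ambient Lie group, so the proof reduces to extending this $U(1)\times U(r)$-action on $Y_{h_-}$ to a $U(r+1)$-action via right multiplication (equivalently, conjugation on Hermitian matrices) by $\iota_{[j,j+r]}(A)$.

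Granted this extension, the inverse of $(\pi_-,\pr_2)$ is the visible map $\Psi:(y_0,[A])\mapsto[\iota_{[j,j+r]}(A)y_0\iota_{[j,j+r]}(A)^{-1},A]$. The well-definedness of both maps on orbits follows from the compatibility $\iota_{[j,j+r]}(\iota'(B))=\iota(B)$ for $B\in U(1)\times U(r)$, which is transparent from Definition \ref{43}, and the identities $\Psi\circ(\pi_-,\pr_2)=\id$ and $(\pi_-,\pr_2)\circ\Psi=\id$ become one-line calculations using that $\iota_{[j,j+r]}$ is a group homomorphism. Smoothness is automatic since every constituent map (matrix multiplication, conjugation, and the taking of quotients by the free action of $U(1)\times U(r)$) is smooth.

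The crux of the argument, and the step I expect to require the most care, is the verification that right multiplication by $\iota_{[j,j+r]}(A)$ preserves $Y_{h_-}$ for every $A\in U(r+1)$, not merely for $A\in\iota'(U(1)\times U(r))$. Using the entrywise description \eqref{25}, the only entries of $y'=\iota_{[j,j+r]}(A)^{-1}y\iota_{[j,j+r]}(A)$ that differ from those of $y$ lie in the rows and columns indexed by $[j,j+r]$, and the question reduces to checking that for each fixed $i\notin[j,j+r]$ the entries $(y_{il})_{l\in[j,j+r]}$ are \emph{simultaneously} forced to zero or \emph{simultaneously} unconstrained on $Y_{h_-}$, so that their linear mixing by $A$ remains in $Y_{h_-}$. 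This uniform behavior is precisely what the combinatorial hypotheses of Definition \ref{43}, namely $h(j)=\cdots=h(j+r)$ (which forces $h_-(l)\geq j+r$ for $l\in[j,j+r]$), $h^{-1}(\{j+1,\dots,j+r-1\})=\emptyset$, $h^{-1}(j)=\{j_0\}$, together with $h_-(j_0)=j-1$, are designed to guarantee: they preclude any $i\notin[j,j+r]$ from having $h_-(i)\in\{j,\dots,j+r-1\}$. The principal $(r+1)\times(r+1)$ submatrix on block $[j,j+r]$ is unconstrained Hermitian since $h_-(l)\geq j+r$ there, hence invariant under conjugation. The type (2) case is handled identically by the duality recorded in Remark \ref{r1}.
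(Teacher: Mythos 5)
Your proposal is correct and takes essentially the same approach as the paper: exhibit the explicit inverse $[Tg,[A]]\mapsto[Tg\iota(A)^{-1},A]$ and observe that well-definedness hinges on $Y_{h_-}$ being invariant under the full $\iota_{[j,j+r]}(U(r+1))$-action, not just the $U(1)\times U(r)$-subgroup. The paper merely asserts this invariance (by appeal to \eqref{25}, stated just after \eqref{n22}), whereas you carry out the combinatorial verification from Definition~\ref{43} in detail, which is a worthwhile supplement to the argument.
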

\begin{proof}
	Note that $Y_{h_-}$ is invariant under the action of $\iota(U(r+1))$. Hence the map $(Tg,[A])\mapsto [Tg\iota (A)^{-1},A]$ is the well defined inverse.
	\end{proof}
\begin{proposition} \label{38}
	The map
		\beq \label{41} (\pi,\pr_2):\tY_\bh \lra  Y_{h_+}\times \PP^r\eeq
		induced by \eqref{9} and \eqref{n20} is an embedding of $\tY_\bh$ onto the submanifold defined by 
		\beq \label{42} \tY_\bh \cong \{(y,[v])\in Y_{h_+}\times \PP^r~:~ f_{\bh}(y) \in \C v\}.\eeq
		In particular, \eqref{11} fits into the diagram   
		\beq \label{37}  
		\xymatrix{&\tY_\bh \ar[ld]_-{\pi}\ar[rd]^-{\pr_2}& \\ Y_{h_+}\ar@{-->}[rr]^{\eqref{11}}&&\PP^r}\eeq
		where $\pr_2$ is a smooth fibration with fiber $Y_h$.
		\eqref{11}
\end{proposition}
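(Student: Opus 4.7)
The plan is to treat a triple $\bh$ of type (1); the type (2) case is parallel, with the last row of $A$ and the row index $h(j+1)$ taking the role of the first row and the column index $j_0$. Write $J := \iota_{[j,j+r]}: U(r+1) \hookrightarrow U(n)$, so that $\iota = J \circ \iota'$. The three main steps are: (a) an explicit matrix computation to show $(\pi,\pr_2)$ lands in the candidate submanifold $Z := \{(y,[v]) \in Y_{h_+} \times \PP^r : f_\bh(y) \in \C v\}$; (b) construction of an inverse map $Z \to \tY_\bh$ via local smooth lifts; and (c) deduction of the fibration property and of the commutativity of \eqref{37} from the associated bundle presentation of $\tY_\bh$.

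For (a), let $[Tg,A] \in \tY_\bh$ and let $y_0 = g^{-1} x g \in Y_h$ correspond to $Tg$ via \eqref{n7}, so that $\pi([Tg,A]) = Tg\, J(A)$ corresponds to $y := J(A)^{-1}\, y_0\, J(A)$. Since $j_0 < j$, left-multiplying by $J(A)^{-1}$ does not change the $j_0$-th row, so row $j_0$ of $y$ restricted to columns $[j,j+r]$ equals the row vector $((y_0)_{j_0,j}, \ldots, (y_0)_{j_0,j+r}) \cdot A$. But $y_0 \in Y_h$ and $h(j_0) = j$ force this row vector to equal $(\alpha, 0, \ldots, 0)$ with $\alpha := (y_0)_{j_0,j}$, giving $f_\bh(y) = \alpha \cdot (\text{first row of } A)$. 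Under the identification $(U(1) \times U(r))\backslash U(r+1) \cong \PP^r$ sending $A$ to $[\text{first row of } A]$ (the first row transforming by a scalar under the left $\iota'$-action), this gives $f_\bh(y) \in \C\, \pr_2([Tg,A])$, so $(\pi,\pr_2)$ lands in $Z$. A routine entry-by-entry case analysis using $h(j) = \cdots = h(j+r)$, $h^{-1}(\{j+1, \ldots, j+r-1\}) = \emptyset$, and the monotonicity of $h$ verifies that $y \in Y_{h_+}$.

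For (b), given $(y,[v]) \in Z$, lift $[v]$ to some $A \in U(r+1)$ with first row representing $[v]$ (determined up to the left $\iota'(U(1) \times U(r))$-action), and set $y_0 := J(A)\, y\, J(A)^{-1}$. Reversing the computation of (a) and using the hypothesis $f_\bh(y) \in \C v$, the entries $(y_0)_{j_0,j+1}, \ldots, (y_0)_{j_0,j+r}$ vanish; a similar case-by-case check then gives $y_0 \in Y_h$. Choose $g \in U(n)$ with $g^{-1} x g = y_0$ (unique up to left $T$-multiplication) and define the inverse $(y,[v]) \mapsto [Tg,A]$. Well-definedness: replacing $A$ by $\iota'(B)^{-1} A$ for $B \in U(1) \times U(r)$ gives, via $J \circ \iota' = \iota$, $y_0 \mapsto \iota(B)^{-1}\, y_0\, \iota(B)$ and hence $g \mapsto g\, \iota(B)$, so $(Tg,A) \mapsto (Tg\, \iota(B), \iota'(B)^{-1} A)$, which lies in the same equivalence class by \eqref{36}. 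Using local smooth sections of $U(r+1) \to \PP^r$, both $(\pi, \pr_2)$ and its inverse are smooth, so $(\pi, \pr_2)$ is a diffeomorphism onto $Z$ and an embedding; in particular, $Z$ is a smooth submanifold of $Y_{h_+} \times \PP^r$.

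For (c), the projection $\pr_2: \tY_\bh \to \PP^r$ is a smooth fibration with fiber $Y_h$ directly from the associated bundle construction $\tY_\bh = Y_h \times_{U(1) \times U(r)} U(r+1)$. The commutativity of \eqref{37} reduces to the identity of (a): over $Y_{h_+} \setminus Y_{h_-}$ (where $f_\bh \neq 0$ and $[v] = [f_\bh(y)]$ is uniquely determined, making $\pi^{-1}$ a well-defined map), $\pr_2 \circ \pi^{-1}$ sends $y$ to $[\text{first row of } A] = [f_\bh(y)]$, matching \eqref{11}. The main obstacle throughout is the combinatorially delicate, case-by-case verification of the staircase conditions needed to show $y \in Y_{h_+}$ in (a) and $y_0 \in Y_h$ in (b); this analysis must separately treat pairs $(k,l)$ according to their position relative to the block $[j,j+r]$ and invokes the full combinatorial content of the modular triple, particularly the vanishing of $h^{-1}$ on $\{j+1,\ldots,j+r-1\}$ and the constancy of $h$ on $[j,j+r]$.
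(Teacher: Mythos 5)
Your proof is essentially correct and the key computation --- identifying $f_\bh(y)$ with a scalar multiple of the first (resp.\ last) row of $A$ after conjugation by $\iota(A)$ --- is the same as the paper's, just phrased in terms of row $j_0$ rather than column $j_0$ (which is equivalent for Hermitian matrices). Where you diverge is in how the embedding is established. The paper first observes that $Y_{h_+}$ is invariant under right multiplication by $\iota(U(r+1))$, so that $Y_{h_+}\times\PP^r \cong Y_{h_+}\times_{U(1)\times U(r)} U(r+1)$, and then notes that $(\pi,\pr_2)$ is simply the map of associated bundles induced by the closed inclusion $Y_h\hookrightarrow Y_{h_+}$; the embedding property is then automatic and free of local computations. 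You instead construct an explicit inverse $Z\to\tY_\bh$. That is fine, but it leaves a small gap: knowing that $(\pi,\pr_2)$ is an injective continuous map from a compact space with a continuous two-sided inverse on its image only yields a \emph{topological} embedding. To conclude it is a \emph{smooth} embedding onto a submanifold you additionally need injectivity of the differential, which does not follow from your inverse construction until you already know $Z$ is a submanifold --- a mild circularity. This is easy to patch (either check the rank directly, or switch to the paper's associated-bundle argument for the embedding step and keep your computation only for the image characterization), but as written it is a genuine logical hole. Finally, like the paper, you defer the verifications that $y\in Y_{h_+}$ in (a) and $y_0\in Y_h$ in (b) to a ``case-by-case check''; the paper absorbs these into the single cleaner observation that $Y_{h_+}$ (and $Y_{h_-}$) are invariant under right multiplication by $\iota(U(r+1))$, which you may find worth stating explicitly rather than re-deriving entry by entry.
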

\begin{proof}
	Since $Y_{h_+}$ is invariant under the action of $\iota(U(r+1))$ for $\iota$ defined in Definition~\ref{43}, 
	the same argument in the proof in Proposition~\ref{40} proves that $Y_{h_+}\times \PP^r$ is isomorphic to the quotients $Y_{h_+}\times_{U(1)\times U(r)}U(r+1)$ or $Y_{h_+}\times_{U(r)\times U(1)}U(r+1)$ defined in the same manner as in Definition~\ref{43} (1) and (2) respectively.
 	Under this isomorphism, \eqref{41} is induced from the canonical inclusion $Y_h\subset Y_{h_+}$, in particular it is an embedding. 
 	
 	To see \eqref{42}, note that a point $(y,a)=(Tg,[A])\in Y_{h_+}\times \PP^r$ with $A\in U(r+1)$ lies in the image of \eqref{41} if and only if $Tg\iota(A)^{-1}\in Y_h$, or equivalently, $\iota(A)(g^{-1}x g)\iota(A)^{-1}$ is contained in \eqref{25}. 
  	The latter is equivalent to that the last (resp. first) $r$ coordinates of the 
  	$j_0$-th (resp. $h(j+1)$-th)
  	column vector
 	\begin{equation}\label{52}
 		\begin{split}
 			&A (f_{j,j_0}(y),~\cdots,~f_{j+r,j_0}(y))^t\qquad =A \overline{f_\bh(y)}^t\\
 			(\text{resp. } ~&A (f_{j-r+1,h(j+1)}(y),~\cdots,~ f_{j+1,h(j+1)} )^t)
 		\end{split}
 	\end{equation}
 	of $\iota(A)(g^{-1}x g)\iota(A)^{-1}$ vanish if $\bh$ is of type (1) (resp. type (2)), by \eqref{25} and \eqref{23}. 
 	Here $f_{i,k}(y)$ denotes the $(i,k)$-th component of the matrix $g^{-1}xg$ by \eqref{n7} and \eqref{23}. Also note that the right multiplication of $\iota(A)^{-1}$ in $\iota(A)(g^{-1}x g)\iota(A)^{-1}$ does not change the $j_0$-th (resp. $h(j+1)$-th) column vector since we assume $j_0<j$ (resp. $h(j+1)>j+1$) in Definition~\ref{43}.

 	Under the isomorphisms \eqref{n25} which send the $a=[A]$ to the class represented by the first (resp. last) row vectors of $A$, this is equivalent to that the vector $f_\bh(y)$ is parallel to a vector $v$ representing $a=[v]$, or equivalently, $f_\bh(y)\in \C v$. 
 	The last assertion is immediate.
\end{proof}

\begin{proposition}\label{39} ~
\begin{enumerate}
	\item The left square in  the commutative diagram
	\[\xymatrix{E_\bh\ar@{^(->}[r]^-{\jmath} \ar[d]_-{\pi_-}&\tY_\bh  \ar@{^(->}[r] \ar[d]_-{\pi} & Y_{h_+}\times \PP^r \ar[ld] \\ Y_{h_-} \ar@{^(->}[r] & Y_{h_+}&}\]
		is Cartesian.   		
		\item  $\pi$ is a diffeomorphism over $Y_{h_+}-Y_{h_-}$ with the inverse map 
		\beq \label{51} Y_{h_+}-Y_{h_-}\lra \tY_\bh\subset  Y_{h_+}\times \PP^r, \qquad y\mapsto (y,[f_\bh(y)]).\eeq 
		\item $\pi$ is the trivial $\PP^r$-bundle over $Y_{h_-}$ via \eqref{44}.
		\item The normal bundle of $\jmath$ is isomorphic to the pullback of the tautological complex line bundle $\sO_{\PP^r}(-1)$ via \eqref{44}, as a real vector bundle. 
		\end{enumerate}
\end{proposition}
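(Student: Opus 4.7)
The strategy is to exploit the explicit embedding $\tY_\bh\subset Y_{h_+}\times\PP^r$ from \eqref{42} together with the transversal vanishing description \eqref{12} of $Y_{h_-}\subset Y_{h_+}$. Parts (1)--(3) are formal consequences. For (1), a point $(y,[v])\in\tY_\bh$ has $\pi(y,[v])=y\in Y_{h_-}$ iff $f_\bh(y)=0$, and then the incidence relation $f_\bh(y)\in \C v$ places no restriction on $[v]$; hence $\pi^{-1}(Y_{h_-})=Y_{h_-}\times\PP^r$ set-theoretically. Matching this with $E_\bh$ via the diffeomorphism \eqref{44} and checking compatibility with $\jmath$ and $\pi_-$ is straightforward from the definitions in \S\ref{3}. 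For (2), when $f_\bh(y)\neq 0$ the class $[v]=[f_\bh(y)]$ is the unique line in $\PP^r$ containing $f_\bh(y)$, so $y\mapsto(y,[f_\bh(y)])$ is a smooth inverse to $\pi$. Part (3) is the composition of (1) with \eqref{44}.

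The main content is (4), which I would argue locally. By transversality of \eqref{12}, around any $y_0\in Y_{h_-}$ the components $u_0,\ldots,u_r$ of $f_\bh$ give $\C$-valued functions on a neighborhood $U\subset Y_{h_+}$ such that $(u_0,\ldots,u_r):U\to\C^{r+1}$ is a submersion with $U\cap Y_{h_-}=\{u_0=\cdots=u_r=0\}$. Intersecting \eqref{42} with $U\times\PP^r$ then exhibits $\pi^{-1}(U)$ as the standard blowup of the target $\C^{r+1}$ at the origin, fibered over $U\cap Y_{h_-}$. On the chart $\{v_i\neq 0\}$ of $\PP^r$, writing $u_j=tv_j$ and $u_i=t$ makes $t$ a local defining function for $E_\bh$, and the intrinsic description $f_\bh(y)=t\cdot v$ for the chosen affine lift $v\in\C^{r+1}$ of $[v]$ shows that $t$ transforms under a change of affine lift by the same scaling as $v$, i.e., by the transition function of $\sO_{\PP^r}(-1)$. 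Hence the complex normal bundle of $\jmath$ is canonically the pullback of $\sO_{\PP^r}(-1)$ under the second projection of \eqref{44}, proving (4) at the level of complex line bundles and a fortiori as real rank $2$ bundles.

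The main obstacle lies in (4): $Y_{h_+}$ is only a compact real smooth manifold, yet the modification $\pi$ is naturally a complex-analytic blowup because $f_\bh$ is $\C$-valued and its differential is complex linear on the normal directions by transversality. Once this complex structure on the normal directions is recognized, the computation reduces to the classical identity $N_{\PP^r/\Bl_0\C^{r+1}}\cong \sO_{\PP^r}(-1)$, and the intrinsic characterization of $t$ given above guarantees that the local identifications glue independently of the chart chosen on $\PP^r$, producing the global isomorphism with $\pr_2^*\sO_{\PP^r}(-1)$.
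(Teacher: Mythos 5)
For parts (1)--(3) your argument is essentially the paper's: both use the embedding \eqref{42}, the transversal vanishing description \eqref{12}, and Proposition~\ref{38}. For part (4), however, you take a genuinely different route. The paper argues globally: it observes that $E_\bh$ is the transversal zero locus in $\tY_\bh$ of an explicit map into the associated bundle $\C\times_{U(1)\times U(r)}U(r+1)$ (resp.\ $\C\times_{U(r)\times U(1)}U(r+1)$), which by $U(1)\times U(r)$-equivariance and \eqref{24} is exactly $\pr_2^*\sO_{\PP^r}(-1)$; transversality in real codimension $2$ then gives $N_\jmath\cong\pr_2^*\sO_{\PP^r}(-1)$ directly, with no gluing to verify. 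You instead recognize $\pi$ locally as the standard blowup of $\C^{r+1}$ at the origin (fibered over $Y_{h_-}$), invoke $N_{\PP^r/\Bl_0\C^{r+1}}\cong\sO_{\PP^r}(-1)$, and then argue that the chart identifications are independent of choices. Both work; the paper's version is shorter because the quotient presentation of $\tY_\bh$ and $E_\bh$ makes the line bundle visible at once. One small slip in your gluing step: if the affine lift scales by $v\mapsto\lambda v$, then from $f_\bh(y)=t\,v$ you get $t\mapsto\lambda^{-1}t$, i.e.\ $t$ transforms by the \emph{inverse} scaling of $v$, not "the same scaling." This inverse scaling is precisely the fiber-coordinate transition of $\sO_{\PP^r}(-1)$, so your conclusion $N_\jmath\cong\pr_2^*\sO_{\PP^r}(-1)$ stands, but the sentence should be corrected so the two clauses are actually consistent.
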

\begin{proof} (1) and (3) 
	follow from \eqref{12}, which implies the vanishing of \eqref{52} for every $A\in U(r+1)$, so that $\pi^{-1}(Y_{h_-})=Y_{h_-}\times \PP^r\cong E_\bh$. Furthermore, (2) follows from Proposition~\ref{38}. 
	(4) follows from the fact that $E_\bh$ is the vanishing locus in $\tY_\bh$ of the map
	\[(f_{j_0,j},\mathrm{id}):\tY_{\bh}=Y_h\times_{U(1)\times U(r)}U(r+1)\lra\C\times _{U(1)\times U(r)}U(r+1)\]
	when $\bh$ is of type (1) and
	\[(f_{h(j+1),j+1},\mathrm{id}):\tY_{\bh}=Y_h\times_{U(r)\times U(1)}U(r+1)\lra\C\times _{U(r)\times U(1)}U(r+1)\]
	when $\bh$ is of type (2) respectively, and the fact that $E_\bh\subset \tY_\bh$ is submanifold of real codimension two. The complex line bundles $\C\times _{U(1)\times U(r)}U(r+1)$ and $\C\times _{U(r)\times U(1)}U(r+1)$ are the tautological complex line bundle $\sO_{\PP^r}(-1)$ over $\PP^r$ by \eqref{24}, via the isomorphisms \eqref{n25}.
\end{proof}
\bigskip

\subsection{Cohomology of the roof}
In this subsection, we compare the cohomology and $T$-equivariant cohomology of  
the  twin manifolds $Y_{h_-}$, $Y_{h}$ and $Y_{h_+}$ associated to a triple $\bh=(h_-,h,h_+)$  
 by Propositions \ref{38} and \ref{39}.

First observe that we have natural $T$-actions on $\tY_\bh$ and $E_\bh$ as follows. 
\begin{definition}
	Let $\bh=(h_-,h,h_+)$ be a triple in Definition~\ref{43}. 
	Define (right) $T$-actions on $\tY_\bh$ and $E_\bh$ by  
	\beq \label{45} [Tg,A].t=[Tgt,(t')^{-1}At']\eeq
	for $t=\diag(t_1,\cdots, t_n)\in T$ and $t'\in U(r+1)$ given by
	\[t'=\begin{cases}
		\diag(t_j,\cdots, t_{j+r}) &\text{ if $\bh$ is of type (1),}\\
		\diag(t_{j-r+1},\cdots, t_{j+1})  &\text{ if $\bh$ is of type (2).}
	\end{cases}\]
	This induces a natural $T$-action on $\PP^r$ via \eqref{n25}, 
	which coincides with
	the componentwise multiplication of $t'$.
\end{definition}
 It is straightforward to see that all the morphisms in Propositions~\ref{40}, \ref{38} and \ref{39} are $T$-equivariant.

\medskip

By Proposition \ref{38}, $\pr_2$ is a fiber bundle with fiber $Y_h$. 
By Proposition \ref{n32} and \eqref{n31}, the odd degree parts of $H^*(Y_h)$ and $H^*_T(Y_h)$ vanish. Hence the spectral sequence for $\pr_2$ degenerates and we have isomorphisms 
\beq\label{n33}
H^*(\tY_\bh)\cong H^*(Y_{h})\otimes H^*(\PP^r) \and H^*(E_\bh)\cong H^*(Y_{h_-})\otimes H^*(\PP^r).
\eeq
Letting $\gamma=c_1(\sO_{\PP^r}(-1))$, we have a ring isomorphism
$H^*(\PP^r)\cong \QQ[\gamma]/(\gamma^{r+1})$ and the second isomorphism in \eqref{n33} is in fact the inverse of 
\beq\label{r3} \sum_i\beta_i\otimes \gamma^i\mapsto 
\sum_i\gamma^i\cup \pi_-^*\beta_i,  
 \quad \beta_i\in H^{*-2i}(Y_{h_-})\eeq 
by Proposition \ref{40} above.  Here we are abusing the notation by denoting the pullback of $\gamma$ to $E_\bh$ by $\gamma$ to simplify the notation.

Similarly as in \eqref{n33}, we have
\beq\label{n34}
H^*_T(\tY_\bh)\cong H^*_T(Y_{h})\otimes H^*(\PP^r) \and H^*_T(E_\bh)\cong H^*_T(Y_{h_-})\otimes H^*(\PP^r)
\eeq
for the $T$-equivariant cohomology.

By Proposition~\ref{39}, we have the following 
 \emph{blowup formula}.
\begin{proposition} \label{16} Let $\bh=(h_-,h,h_+)$ be as above. Then the map
	\beq \label{17} H^*(Y_{h_+}) \oplus \bigoplus_{i=1}^{r}H^{*-2i}(Y_{h_-})\xrightarrow{~\cong~}H^*(\tY_{\bh})\eeq
	sending $(\a,\b_1,\cdots,\b_{r})$ to  $\pi^*\a+
	\sum_{i=1}^{r} \jmath_*(e(N_\jmath)^{i-1}\cup \pi_-^*\b_i)$
	is an isomorphism,
	where $e(N_\jmath)$ denotes the Euler class of the normal bundle $N_\jmath$ of the canonical inclusion $\jmath:E_{\bh}\hookrightarrow \tY_{\bh}$ 
	and $\jmath_*$ denotes the Gysin homomorphism induced by $\jmath$. 
	Similarly, the map
	\beq \label{18} H^*_T(Y_{h_+}) \oplus \bigoplus_{i=1}^{r}H^{*-2i}_T(Y_{h_-})\xrightarrow{~\cong~}H^*_T(\tY_{\bh})\eeq
	sending $(\a,\b_1,\cdots,\b_{r})$ to  $\pi^*\a+
	\sum_{i=1}^{r} \jmath_*(e^T(N_\jmath)^{i-1}\cup \pi_-^*\b_i)
	$
	is an $H^*_T$-module isomorphism, where $e^T(N_\jmath)$ denotes the $T$-equivariant Euler class of $N_\jmath$. 
\end{proposition}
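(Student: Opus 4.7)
The plan is to apply the classical blowup formula in smooth topology, using the fact that Proposition~\ref{39} realizes $\pi:\tY_\bh\to Y_{h_+}$ as the blowup of $Y_{h_+}$ along $Y_{h_-}$ with exceptional divisor $E_\bh$. First I would verify the codimension of the blowup center: by \eqref{25}, \eqref{23} and \eqref{12}, $Y_{h_-}$ is the transverse vanishing locus in $Y_{h_+}$ of the $\C^{r+1}$-valued map $f_\bh$, so it is a submanifold of complex codimension $r+1$. Combined with Proposition~\ref{39}~(2) ($\pi$ is a diffeomorphism off $E_\bh$), Proposition~\ref{39}~(3) together with Proposition~\ref{40} (the exceptional divisor $E_\bh\cong Y_{h_-}\times \PP^r$ is a trivial $\PP^r$-bundle over $Y_{h_-}$ via $\pi_-$), and Proposition~\ref{39}~(4) ($N_\jmath\cong \pi_-^*\sO_{\PP^r}(-1)$), this places $\pi$ into the classical setup of a blowup along a complex-codimension-$(r+1)$ submanifold.

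To prove the isomorphism \eqref{17}, I would compare the long exact sequences of the pairs $(\tY_\bh,\tY_\bh-E_\bh)$ and $(Y_{h_+},Y_{h_+}-Y_{h_-})$ under the morphism $\pi$, which restricts to a diffeomorphism on the complements. The Thom isomorphism identifies the relative cohomology groups as $H^{*-2}(E_\bh)$ and $H^{*-2(r+1)}(Y_{h_-})$ respectively. Writing $\gamma=e(N_\jmath)|_{E_\bh}\in H^2(E_\bh)$, the Leray--Hirsch decomposition (valid since $E_\bh\cong Y_{h_-}\times\PP^r$ by Proposition~\ref{40}) gives
\[
H^*(E_\bh)=\bigoplus_{i=0}^{r}\gamma^i\cup \pi_-^*H^{*-2i}(Y_{h_-}),
\]
and the Gysin pushforward $(\pi_-)_*:H^*(E_\bh)\to H^{*-2r}(Y_{h_-})$ is projection onto the $\gamma^r$-summand. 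Under these identifications, the map on relative cohomology induced by $\pi$ becomes projection onto the top summand, with kernel precisely $\bigoplus_{i=1}^{r}\gamma^{i-1}\cup \pi_-^*H^{*-2i}(Y_{h_-})$. A short diagram chase on the long exact sequences then yields the desired splitting, where each kernel summand is mapped into $H^*(\tY_\bh)$ by $\jmath_*(\gamma^{i-1}\cup \pi_-^*(-))=\jmath_*(e(N_\jmath)^{i-1}\cup \pi_-^*(-))$ and the complement is $\pi^*H^*(Y_{h_+})$.

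For the $T$-equivariant version \eqref{18}, all maps in Propositions~\ref{38}, \ref{39} and \ref{40} are $T$-equivariant by the $T$-action defined in \eqref{45}, so the identical argument runs in $T$-equivariant cohomology with equivariant Thom isomorphisms, Gysin maps, and the equivariant Euler class $e^T(N_\jmath)$ in place of their ordinary counterparts. The main obstacle is not conceptual but rather bookkeeping: one must verify carefully that all the identifications (Thom, Leray--Hirsch, and the normal bundle $N_\jmath\cong \pi_-^*\sO_{\PP^r}(-1)$) are compatible, so that the map $\jmath_*(e^T(N_\jmath)^{i-1}\cup \pi_-^*(-))$ really does land in the $i$-th summand and the total map is the stated $H^*_T$-module isomorphism.
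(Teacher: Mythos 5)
Your approach is correct in outline and is close in spirit to the paper's: both arguments compare a ladder of long exact sequences associated to the closed/open decomposition $E_\bh\subset\tY_\bh$ and $Y_{h_-}\subset Y_{h_+}$, use the diffeomorphism of $\pi$ off the exceptional locus, the Leray--Hirsch decomposition of $H^*(E_\bh)\cong H^*(Y_{h_-})\otimes H^*(\PP^r)$ from Proposition~\ref{40}, the identification $N_\jmath\cong \pi_-^*\sO_{\PP^r}(-1)$ from Proposition~\ref{39}~(4), and the self-intersection identity $\jmath^*\jmath_*=e(N_\jmath)\cup(-)$. The paper's proof uses the long exact sequence in compactly supported cohomology with vertical maps $\pi^*$ (so the relevant term is the degree-preserving restriction $\pi_-^*:H^k(Y_{h_-})\to H^k(E_\bh)$, and the five lemma gives injectivity of $\pi^*$, after which a diagram chase identifies $\mathrm{Coker}(\pi^*)\cong\mathrm{Coker}(\pi_-^*)$); yours uses the Thom--Gysin sequence of the pair $(M,M\setminus Z)$. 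Both are standard ways to get the blowup formula.

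One point to tighten: you set up the ladder ``under the morphism $\pi$,'' which for a map of pairs means the pullback $\pi^*$, but you then describe the induced map on relative cohomology as ``projection onto the top summand with kernel $\bigoplus_{i=1}^r\gamma^{i-1}\cup\pi_-^*H^{*-2i}(Y_{h_-})$.'' After the Thom identifications, $\pi^*$ on relative cohomology is a degree $+2r$ map $H^{*-2(r+1)}(Y_{h_-})\to H^{*-2}(E_\bh)$, given (up to sign) by $\beta\mapsto\gamma^r\cup\pi_-^*\beta$; this is the \emph{inclusion} of the top summand, with \emph{cokernel} (not kernel) the direct sum you wrote. The ``projection with kernel'' description belongs to the pushforward $\pi_*$. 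Either version can be run to completion, but if you keep $\pi_*$ you must say so explicitly and check that it commutes with the connecting homomorphisms of the long exact sequences; if you keep $\pi^*$, replace ``projection/kernel'' with ``inclusion/cokernel.'' It would also help to state explicitly, as the paper does, that $\jmath^*\jmath_*(\gamma^{i-1}\cup\pi_-^*\beta)=\gamma^i\cup\pi_-^*\beta$, which is precisely why the proposed splitting map lands in the correct Leray--Hirsch summand and hence is a section.
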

\begin{proof}
	We will show that $\jmath^*:H^*(\tY_\bh) \to H^*(E_\bh) $ induces an isomorphism
	\beq\label{47}
	\mathrm{Coker}\left(\pi^*\right)\xrightarrow{~\cong~}\mathrm{Coker}\left(\pi_-^*\right)\cong \bigoplus_{i=1}^rH^{*-2i}(Y_{h_-})\otimes H^{2i}(\PP^r)
	\eeq 
	using Propositions~\ref{40} and \ref{39}.
	Then a splitting of a short exact sequence
	\[0\lra H^*(Y_{h_+})\xrightarrow{\;\pi^*\;} H^*(\tY_\bh)\lra \bigoplus_{i=1}^rH^{*-2i}(Y_{h_-})\lra 0\]
	is given by 	
	the map \[\bigoplus_{i=1}^rH^{*-2i}(Y_{h_-})\lra  H^*(\tY_\bh), \quad (\b_1,\cdots,\b_r)\mapsto \sum_{i=1}^r \jmath_*\left(\gamma^{i-1}\cup \pi_-^*\b_i\right)\]
	since 
	$\jmath^*\jmath_*\left(\gamma^{i-1}\cup \pi_-^*\b_i\right)=\gamma^{i}\cup\pi_-^*\b_i$
	by Proposition~\ref{39} (4) and this corresponds to $\b_i$ via \eqref{r3}.

	In the rest of the proof, we show that \eqref{47} is an isomorphism.
	By \eqref{n33},  $H^{2k+1}(\tY_\bh)=H^{2k+1}(E_\bh)=0$ for all $k$. 
	Similarly, we have $H^{2k+1}_T(\tY_\bh)=H^{2k+1}_T(E_\bh)=0$ for all $k$.
	From (1) 
	and (2)
	in Proposition~\ref{39},
	we have a commutative diagram of exact sequences 
	\small
\beq \label{7}\xymatrix{ 
&&0\ar[d] &0\ar[d]\\
0\ar[r] &H^{2k}_c(Y_{h_+}-Y_{h_-})\ar[r] \ar[d]^-{\cong }& H^{2k}(Y_{h_+}) \ar[r]\ar[d]^-{\pi^*} & H^{2k}(Y_{h_-}) \ar[r]\ar[d]^-{\pi_-^*}& H^{2k+1}_c(Y_{h_+}-Y_{h_-})\ar[d]^-{\cong }\ar[r] &0 \\ 
0\ar[r] &H^{2k}_c(\tY_{\bh}-E_{\bh}) \ar[r] & H^{2k}(\tY_{\bh}) \ar[r]^-{\jmath^*} \ar[d] & H^{2k}(E_{\bh}) \ar[r]\ar[d] & H^{2k+1}_c(\tY_{\bh}-E_{\bh})\ar[r] &0\\ 
&& \mathrm{Coker}(\pi^*)  \ar[r]^-{\cong }\ar[d] & \mathrm{Coker}(\pi_-^*)\ar[d]\\ 
&& 0 & 0
}\eeq
\normalsize
without two 0's at the top,
for each $k$,  where the two rows are parts of the long exact sequences of cohomology with compact supports. 
By \eqref{44} and the five lemma, $\pi_-^*$ and $\pi^*$ are injective respectively. Then one can check that the horizontal arrow at the bottom is an isomorphism by a diagram chase.
In particular, we have
\[\mathrm{Coker}(\pi^*)\cong \mathrm{Coker}(\pi_-^*)\cong \bigoplus_{i=1}^rH^{2k-2i}(Y_{h_-})\otimes H^{2i}(\PP^r) \] 
in \eqref{47}, where the second isomorphism follows by Proposition~\ref{40}.

By the same argument with the equivariant cohomology instead of the ordinary cohomology, the second assertion also follows, since all maps used in the argument above, including $\pi_-$, $\pi$ and $\jmath$, are $T$-equivariant.
\end{proof}

\medskip

\subsection{$S_n$-representations on the cohomology of the roof}
In this subsection, we prove the following.

\begin{proposition}\label{n35}
There are $S_n$-actions on $H^*_T(\tY_\bh)$ and $H^*(\tY_\bh)$ so that \eqref{n33}, \eqref{n34}, \eqref{17} and \eqref{18} are all isomorphisms of $S_n$-representations. 
\end{proposition}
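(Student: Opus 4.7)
The plan is to use GKM theory to define a canonical $S_n$-action on $H^*_T(\tY_\bh)$ by permuting the $S_n$-index at the fixed points, and then to verify $S_n$-equivariance of each constituent map appearing in the four isomorphisms \eqref{n33}, \eqref{n34}, \eqref{17}, \eqref{18}.

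First I would identify $\tY_\bh^T$. By \eqref{n10}, $Y_h^T$ consists of diagonal matrices with the given spectrum for every Hessenberg function $h$, so $Y_{h_+}^T = Y_{h_-}^T = Y_h^T \cong S_n$ and every fixed point of $Y_{h_+}$ already lies in $Y_{h_-}$. Combined with Proposition~\ref{39}~(1) and Proposition~\ref{40}, this yields $\tY_\bh^T \subset E_\bh$ and $\tY_\bh^T = S_n \times \{p_0, \dots, p_r\}$, where $p_0, \dots, p_r$ are the $T$-fixed points of $\PP^r$. Next I would verify that $\tY_\bh$ is a GKM manifold in the sense of Definition~\ref{n9}: equivariant formality follows from \eqref{17} together with Theorem~\ref{34}~(1), while the tangent space at $(v, p_\ell)$ decomposes via the blowup/fiber-bundle structure into the $Y_{h_-}$-tangent at $v$ with weights $\{t_i - t_j : i < j \le h_-(i)\}$, the $\PP^r$-tangent at $p_\ell$, and the complex normal direction pulled back from $\sO_{\PP^r}(-1)$. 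Crucially, all these weights are independent of $v$, so the 1-skeleton consists of fiber edges $(v, p_\ell) \leftrightarrow (v \cdot (i,j), p_\ell)$ (from the $Y_h$-fibers of $\pr_2$) with weight $t_i - t_j$ for $i < j \le h(i)$, together with base edges $(v, p_\ell) \leftrightarrow (v, p_{\ell'})$ (from $T$-invariant $\PP^1$'s in $\PP^r$) with weights depending only on $\ell, \ell'$. Applying Theorem~\ref{35} yields
\[H^*_T(\tY_\bh) \hookrightarrow \bigoplus_{(v, \ell) \in S_n \times \{0, \dots, r\}} \QQ[t_1, \dots, t_n],\]
and I define the $S_n$-action by $\mu \cdot (p_{v, \ell}) := (p_{\mu^{-1}v, \ell})$. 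Since all edge weights are independent of $v$, this permutation preserves every GKM compatibility condition, extending the dagger action \eqref{1} on $H^*_T(Y_h)$.

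To prove $S_n$-equivariance of the four isomorphisms, I would check that each constituent map is equivariant by inspecting its effect on fixed-point restrictions. The pullbacks $\pi^*, \pi_-^*, \jmath^*, \pr_2^*$ are equivariant since $\pi$ and $\pi_-$ forget the $p_\ell$-coordinate, $\jmath$ is the identity on the common fixed-point set, and $\pr_2$ factors through $p_\ell$ while $S_n$ acts trivially on $H^*(\PP^r)$. For the Gysin map $\jmath_*$, the fact that $\tY_\bh^T \subset E_\bh$ allows us to apply the localization formula
\[\jmath_*(\alpha)|_{(v, p_\ell)} = \alpha|_{(v, p_\ell)} \cdot e^T(N_\jmath)|_{(v, p_\ell)},\]
and by Proposition~\ref{39}~(4), $e^T(N_\jmath)|_{(v, p_\ell)}$ depends only on $\ell$; hence $\jmath_*$ commutes with the permutation of $v$. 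Equivariance of \eqref{18} then follows from its construction out of the equivariant maps $\pi^*, \pi_-^*, \jmath_*$ together with the $S_n$-invariant classes $e^T(N_\jmath)^{i-1}$; equivariance of \eqref{n34} follows by Leray-Hirsch using an $S_n$-equivariant module basis obtained from $\pi^*$. Finally, since the dagger action is $H^*_T$-linear (it does not act on the $t_i$'s), it preserves $\mathfrak{m}H^*_T(\tY_\bh)$ and descends to $H^*(\tY_\bh) = H^*_T(\tY_\bh)/\mathfrak{m}H^*_T(\tY_\bh)$, yielding $S_n$-equivariance of \eqref{17} and \eqref{n33}.

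The main technical obstacle is the first step, namely giving a clean and correct verification that $\tY_\bh$ is a GKM manifold: one must check pairwise non-collinearity of tangent weights at each fixed point where the $\PP^r$-weights interact with the $Y_h$-fiber weights, and classify the $T$-invariant 2-spheres inside the blowup. Once this GKM description is in place, the permutation formula for the $S_n$-action is a direct extension of \eqref{1}, and all the equivariance verifications reduce to mechanical checks at the fixed-point level, aided by the $v$-independence of all relevant weights and Euler classes.
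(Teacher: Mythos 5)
Your overall strategy is the same as the paper's: identify $\tY_\bh^T \cong S_n \times (\PP^r)^T$, define the $S_n$-action on $H^*_T(\tY_\bh^T) \cong \bigoplus_{(v,\ell)} \QQ[t_1,\dots,t_n]$ by permuting the $v$-index, and then verify that each isomorphism is compatible with this action by computing at fixed points. The verification of equivariance for \eqref{18} (the blowup formula) is essentially the paper's Lemma~\ref{n37}, and your reduction from ordinary to equivariant cohomology via $\mathfrak{m}$-quotients matches the paper. However, there are two points worth flagging.

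First, you propose to verify that $\tY_\bh$ is a GKM manifold, including pairwise non-collinearity of tangent weights and classifying the invariant $2$-spheres, and you correctly identify this as your main technical obstacle. But this is more than the paper needs: the paper only uses that $\tY_\bh$ is equivariantly formal (odd cohomology vanishes by \eqref{n33}), which already gives injectivity of $\res: H^*_T(\tY_\bh) \hookrightarrow H^*_T(\tY_\bh^T)$. No GKM image description of $H^*_T(\tY_\bh)$ is used. Instead of showing the permutation action preserves GKM conditions, the paper shows that the image of $\res$ coincides with the image of the explicitly $S_n$-equivariant maps coming from \eqref{18} and \eqref{n34}; the $S_n$-stability of $H^*_T(\tY_\bh)$ inside $H^*_T(\tY_\bh^T)$ comes for free.

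Second, and more seriously, your argument for $S_n$-equivariance of \eqref{n34} via Leray-Hirsch using classes ``obtained from $\pi^*$'' does not work as stated. The classes $\pi^*\alpha$ for $\alpha \in H^*_T(Y_{h_+})$ restrict to the fiber $\pr_2^{-1}(\sigma_k) \cong Y_h$ as the restriction along the embedding $Y_h \hookrightarrow Y_{h_+}$, and this restriction $H^*_T(Y_{h_+}) \to H^*_T(Y_h)$ is not surjective (in GKM terms, $H^*_T(Y_{h_+})$ is the \emph{smaller} subring of $\bigoplus_{S_n}\QQ[t]$ since $Y_{h_+}$ has more $1$-skeleton edges). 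So $\pi^*$ does not produce a Leray-Hirsch basis for the $Y_h$-fibration. The paper circumvents this in Lemma~\ref{n38} by filtering $\tY_\bh$ by $\tY_k = \pr_2^{-1}(H_k)$ where $H_k \subset \PP^r$ are nested coordinate subspaces, obtaining a split Gysin sequence at each step whose splitting is checked to be $S_n$-equivariant using the $v$-independence of the normal weights $\alpha_k$ and the explicit $T$-twisted identification $Y_h \cong \pr_2^{-1}(\sigma_k)$. That construction gives an explicit, equivariant form of \eqref{n34}; without something of that kind your proof of the Proposition has a genuine gap.
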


\begin{proof} 
The statement for the ordinary cohomology follows from that for the equivariant cohomology by \eqref{n31}.  
By using the $S_n$-actions on $H^*_T(Y_h)$, $H^*_T(Y_{h_-})$ and $H^*_T(Y_{h_+})$, we can define two actions of $S_n$ on $H^*_T(\tY_\bh)$ by the isomorphisms \eqref{n34} and \eqref{18}. 
We have to show that the two actions of $S_n$ coincide. 

Note that the set of $T$-fixed points in $\tY_\bh$ is
		\beq \label{26} \tY_{\bh}^T \cong Y_{h_+}^T\times (\PP^r)^T \cong S_n\times \{\sigma_i\}_{0\leq i\leq r}\eeq
		where $\sigma_i$ denote the $i$-th coordinate points.
		Indeed, by \eqref{44} and \eqref{41}, we have
		\[Y_{h_-}^T\times (\PP^r)^T=E_\bh^T~\subset~ \tY_h^T ~\subset ~(Y_{h_+}\times \PP^r)^T=Y_{h_+}^T\times (\PP^{r})^T\]
		where all the inclusions are equalities since $Y_{h_-}^T=Y_{h_+}^T\cong S_n$.
		
By \eqref{n33} and \eqref{n34},  the odd degree part of $H^*(\tY_{\bh})$ vanishes and hence the roof $\tY_\bh$ is equivariantly formal. In particular, the restriction map
		\beq \label{21} \res: H^*_T(\tY_{\bh})\lra H^*_T(\tY_{\bh}^T)\eeq
by the inclusion $\tY_\bh^T\subset \tY_\bh$ is injective.

		Consider a natural $S_n$-action on $H^*_T(\tY_{\bh}^T)$ defined by
		\beq \label{20} (\mu,(p_{v,\sigma})_{(v,\sigma)\in \tY_{\bh}^T})\mapsto  (p_{\mu^{-1}v,\sigma})_{(v,\sigma)\in \tY_{\bh}^T}\eeq
		for $\mu\in S_n$ and $p_{(v,\sigma)}\in H^*_T\cong \Q[t_1,\cdots, t_n]$.
Proposition \ref{n35} then follows if we show that the isomorphisms \eqref{n34} and \eqref{18} composed with \eqref{21} is $S_n$-equivariant with respect to the action \eqref{20}.  
Therefore the proposition follows from Lemmas \ref{n37} and \ref{n38} below. 
\end{proof}

\begin{lemma}\label{n37}
The composition of \eqref{18} and \eqref{21} is $S_n$-equivariant. 
\end{lemma}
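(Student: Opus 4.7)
The plan is to exploit the embedding $\res : H^*_T(\tY_\bh) \hookrightarrow \bigoplus_{(v,\sigma_k)} \Q[t_1,\ldots,t_n]$ from Theorem~\ref{35} and to prove $S_n$-equivariance of the composition of \eqref{18} with $\res$ summand by summand, where the target carries \eqref{20} and the source carries the dagger actions \eqref{1} on $H^*_T(Y_{h_+})$ and $H^*_T(Y_{h_-})$. There are two geometric inputs. First, by \eqref{26} and Proposition~\ref{39}(1), $\tY_\bh^T = E_\bh^T$, and the identification $\tY_\bh^T \cong S_n \times \{\sigma_0,\ldots,\sigma_r\}$ makes both $\pi^T$ and $\pi_-^T$ the projection to $S_n$, because the inclusion $Y_{h_-} \hookrightarrow Y_{h_+}$ is the identity on fixed points and $\pi|_{E_\bh}$ factors as $\pi_-$ followed by this inclusion. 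Second, by Proposition~\ref{39}(4), $N_\jmath \cong \pr_2^*\sO_{\PP^r}(-1)$ as $T$-equivariant line bundles.

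For the summand $\pi^*\alpha$ with $\alpha \in H^*_T(Y_{h_+})$, one has $(\res\,\pi^*\alpha)_{(v,\sigma_k)} = \alpha|_v$, and equivariance against \eqref{1} on the source and \eqref{20} on the target is immediate since both $(\mu \cdot \alpha)|_v$ and $(\mu \cdot \res\,\pi^*\alpha)_{(v,\sigma_k)}$ evaluate to $\alpha|_{\mu^{-1}v}$. For the Gysin summand $\jmath_*(e^T(N_\jmath)^{i-1}\cup \pi_-^*\beta_i)$, I would use $\jmath^T = \mathrm{id}$ to rewrite $\res \circ \jmath_* = \res_{E_\bh} \circ \jmath^*\jmath_*$ on fixed points, apply the self-intersection formula $\jmath^*\jmath_* = e^T(N_\jmath)\cup(-)$, and obtain
\[(\res \circ \jmath_*(e^T(N_\jmath)^{i-1}\cup \pi_-^*\beta_i))_{(v,\sigma_k)} = e^T(N_\jmath)|_{(v,\sigma_k)}^i \cdot \beta_i|_v.\]
By the $T$-equivariant identification of $N_\jmath$ with $\pr_2^*\sO_{\PP^r}(-1)$, the class $\omega_k := e^T(N_\jmath)|_{(v,\sigma_k)}$ depends only on $\sigma_k$. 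Consequently, acting by $\mu$ via \eqref{20} and acting by $\mu$ on $\beta_i$ via \eqref{1} before forming the Gysin image both produce $\omega_k^i \cdot \beta_i|_{\mu^{-1}v}$ at $(v,\sigma_k)$, which gives the desired equivariance.

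The step I expect to require the most care is verifying that $N_\jmath \cong \pr_2^*\sO_{\PP^r}(-1)$ is genuinely $T$-equivariant in the needed sense, so that $e^T(N_\jmath)|_{(v,\sigma_k)}$ is independent of $v\in S_n$. This should follow directly from the construction in the proof of Proposition~\ref{39}(4): $N_\jmath$ is described as the associated bundle $\C \times_{U(1)\times U(r)} U(r+1)$ (or its type (2) analogue) pulled back via $\pr_2$, and by \eqref{45} the $T$-action on $\tY_\bh$ acts on the $U(r+1)$-factor through a fixed subtorus of $T$ independent of the $v$-coordinate; combined with \eqref{24} governing the weight of the defining section, the weight $\omega_k$ of the fiber of $N_\jmath$ over $\sigma_k$ depends only on $\sigma_k$. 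Once this is recorded, the lemma reduces to the termwise calculation sketched above.
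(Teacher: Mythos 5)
Your argument is correct and follows essentially the same route as the paper's: you reduce to checking $S_n$-equivariance summand by summand after composing with $\res$, you use $\pi^T(v,\sigma_k)=v$ for the $\pi^*$ summand, and for the Gysin summands you combine $\res\circ\jmath_* = \res_{E_\bh}\circ\jmath^*\jmath_*$ with the self-intersection formula and the fact that $e^T(N_\jmath)|_{(v,\sigma_k)}$ depends only on $\sigma_k$ (by Proposition~\ref{39}(4) and the $T$-equivariant structure of the construction). The one place where you are slightly more careful than the paper is in flagging that Proposition~\ref{39}(4) is stated only as a real vector bundle isomorphism while the equivariance of $e^T(N_\jmath)|_{\tY_\bh^T}$ requires it $T$-equivariantly; your justification via \eqref{45} and \eqref{24} is exactly the content of the remark the paper records immediately after the lemma.
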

\begin{proof}
Consider the commutative diagram 
	\[\xymatrix{\tY_{\bh}^T\ar@{^(->}[r]\ar[d]^-{\pi^T} & E_{\bh}\ar@{^(->}[r]^-{\jmath} \ar[d]^-{\pi_-} & \tY_{\bh} \ar[d]^-{\pi} \\ Y_{h_+}^T \ar@{^(->}[r] & Y_{h_-}\ar@{^(->}[r] & Y_{h_+}}\]
	where $\pi^T$ denotes the restriction of $\pi$ to the 
	fixed point set.
It suffices to show that the embedding of each component of the left hand side of \eqref{18} into $H^*_T(\tY_\bh^T)$ is $S_n$-equivariant with respect to the actions \eqref{1} and \eqref{20}.  Equivalently, it suffices to show that the compositions
	\begin{center}
		$\res\circ \pi^*=(\pi^T)^*\circ \res :H^*_T(Y_{h_+})\lra H^*_T(\tY_{\bh}^T)$\\
		$\res \circ \jmath_*\circ e^T(N_\jmath)^k\circ\pi_-^*=e^T(N_\jmath)^{k+1}|_{\tY_{\bh}^T}\circ (\pi^T)^*\circ \res :H^*_T(Y_{h_-})\lra H^*_T(\tY_{\bh}^T)$
	\end{center}
	are $S_n$-equivariant for $0\leq k<r$.
	
	Since $\res$ in \eqref{21} and $(\pi^T)^*$ are $S_n$-equivariant as $\pi^T(v,\sigma)=v$ under the isomorphism \eqref{26}, it is enough to show that the $T$-equivariant Euler class map $e^T(N_\jmath)|_{\tY_{\bh}^T}: H^*_T(\tY_{\bh}^T)\to H^{*+2}_T(\tY_{\bh}^T)$ is $S_n$-equivariant. Indeed, by Proposition~\ref{39} (2), each $N_\jmath|_{(v,\sigma_i)}=\pr_2^* (\sO_{\PP^r}(-1)|_{\sigma_i})$ with $v\in S_n$ and $0\leq i\leq r$ does not depend on $v$ and hence $e^T(N_\jmath)|_{\tY_{\bh}^T}$ is $S_n$-equivariant.
\end{proof}	
\begin{remark}
	More precisely, the $T$-equivariant Euler class of $\sO_{\PP^r}(-1)|_{\sigma_i}$ above is $t_{j_0}-t_{j+i}$ for type (1) and $t_{j-r+1+i}-t_{h(j+1)}$ for type (2) respectively, up to sign, by the proof of Proposition~\ref{39} (4) and \eqref{24}.
\end{remark}

\begin{example}[$r=1$]
	Let $\bh=(h_-,h,h_+)$ be a modular triple of type (1). In particular, $r=1$. Then, the inclusions
	\[\xymatrix{Y_h\cong \pr_2^{-1}(0)\ar@{^(->}[r]^-{\jmath} & \tY_\bh & \pr_2^{-1}(\infty)\ar@{_(->}[l]_-{\imath}}\]
	induce the short exact sequence
	\[H_T^{2k-2}(\pr_2^{-1}(0))\xrightarrow{~\jmath_*~} H_T^{2k}(\tY_\bh)\xrightarrow{~\imath^*~} H^{2k}_T(\pr_2^{-1}(\PP^1\setminus\{0\})\cong H_T^{2k}(\pr_2^{-1}(\infty))\lra 0\]
	where $\jmath_*$ is indeed injective, since $\jmath^*\jmath_*$ is the multiplication by $e^T(N_\jmath)=\pm (t_{j+1}-t_j)$ which is not a zero divisor. 
	This gives the decomposition
	\beq \label{60} H^*_T(\tY_\bh)\cong  H^{*-2}_T(\pr_2^{-1}(0))\oplus H^*_T(\pr_2^{-1}(\infty)).\eeq

	Under the identification 
	\[\phi: Y_h\cong \pr_2^{-1}(0)\xrightarrow{~\cong~}\pr_2^{-1}(\infty), \quad Tg\mapsto Tg\cdot (j,j+1)\] 
	the isomorphism \eqref{60} now reads as 
	\[H^*_T(\tY_\bh)\cong H^{*-2}_T(Y_h)\oplus H^*_T(Y_h),\]
	which preserves the submodule generated by $\mathfrak{m}$. One can also immediately check that it is $S_n$-equivariant since $e^T(N_\jmath)$ and $\phi^*\circ \imath^*$ are. 
	Hence, we have
	\[H^*(\tY_\bh)\cong H^{*-2}(Y_h)\oplus H^*(Y_h)\]
	which is $S_n$-equivariant.
	\end{example}
	The arguments used in the above example easily extend to a more general setting in the following lemma.

\begin{lemma}\label{n38}
The composition 
$$ \bigoplus_{i=0}^r H^{*-2i}_T(Y_h)\cong H^*_T(Y_h)\otimes H^*(\PP^r)\cong H^*_T(\tY_\bh)\hookrightarrow H^*_T(\tY_\bh^T)$$
of \eqref{n34} and \eqref{21} is $S_n$-equivariant where $S_n$ acts trivially on $H^*(\PP^r)$. Furthermore, the above isomorphism preserves the submodule generated by $\mathfrak{m}$.
\end{lemma}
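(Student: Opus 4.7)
The preservation of the $\mathfrak{m}$-submodule is immediate because \eqref{n34} is an isomorphism of $H^*_T$-modules, with $H^*_T$ acting on the tensor product $H^*_T(Y_h) \otimes H^*(\PP^r)$ through the first factor only and $\mathfrak{m} \subset H^*_T$.

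For the $S_n$-equivariance, my plan is to reduce to the $T$-fixed point level. By the vanishing of odd $T$-equivariant cohomology of $\tY_\bh$ implied by \eqref{n34} (and recorded in the proof of Proposition~\ref{16}), the restriction $\res$ of \eqref{21} is injective, so it suffices to verify equivariance after composing with $\res$ in $H^*_T(\tY_\bh^T) = \bigoplus_{(v,i)} \QQ[t_1, \ldots, t_n]$, where $S_n$ acts via \eqref{20} by permuting the $v$-label while fixing each $\sigma_i$. To make \eqref{n34} explicit, I would use the section $s_0 : Y_h \to \tY_\bh$, $Tg \mapsto [Tg, e]$, which is $T$-equivariant because $(t')^{-1} \cdot e \cdot t' = e$, and which, under the identification \eqref{26} given by $(\pi, \pr_2)$, sends $v \in Y_h^T$ to $(v, \sigma_0) \in \tY_\bh^T$; hence $s_0^*$ is $S_n$-equivariant and surjective by equivariant Leray-Hirsch. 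The iso \eqref{n34} is then realized as $\alpha \otimes \gamma^k \mapsto \Phi(\alpha) \cup \tilde\gamma^k$, where $\tilde\gamma := \pr_2^*(c_1^T(\mathcal{O}_{\PP^r}(-1))) \in H^2_T(\tY_\bh)$ is $S_n$-invariant (being pulled back from $\PP^r$, whose $T$-fixed points are fixed by the $S_n$-action on $\tY_\bh^T$), and $\Phi$ is an $H^*_T$-linear splitting of $s_0^*$.

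The key step is to choose $\Phi$ to be $S_n$-equivariant. This can be done either abstractly via Maschke's theorem applied to the semisimple algebra $\QQ[S_n]$, combined with the freeness of $H^*_T(Y_h)$ over $H^*_T$ from equivariant formality, or more concretely using the $S_n$-equivariant blowup decomposition of Proposition~\ref{16}: classes in $H^*_T(\tY_\bh)$ are assembled from $\pi^*$-pullbacks from $Y_{h_+}$ and $\jmath_*$-pushforwards from $E_\bh \cong Y_{h_-} \times \PP^r$ (Proposition~\ref{40}), and both ingredients are $S_n$-equivariant by Lemma~\ref{n37}. The fixed-point computation $\pi^*(\beta)|_{(v, \sigma_i)} = \beta|_{v\tilde\sigma_i}$, where $\tilde\sigma_i \in S_n$ is the permutation matrix corresponding to $\sigma_i$, shows that the $v$-dependence enters only through an index shift that commutes with the dagger action $v \mapsto \mu^{-1} v$; the same holds for the Gysin contributions, since $e^T(N_\jmath)|_{(v,\sigma_i)}$ depends only on $i$. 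The principal obstacle in carrying out this program is the bookkeeping required to verify that the explicit $\Phi$ thus constructed is simultaneously an $H^*_T$-linear splitting of $s_0^*$, is $S_n$-equivariant, and reproduces the Leray-Hirsch decomposition of \eqref{n34}.
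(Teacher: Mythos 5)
Your proposal contains a plausible germ of an idea but two of its key ingredients don't hold up, and the gap you yourself flag (``the principal obstacle\ldots is the bookkeeping'') is exactly where the paper's proof does its real work.

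First, the averaging / Maschke approach has a chicken-and-egg problem. To average a splitting of $s_0^*$ over $S_n$ you need an $S_n$-action on the codomain $H^*_T(\tY_\bh)$, but a priori $S_n$ acts only on $H^*_T(\tY_\bh^T)$ via \eqref{20}, and it is not yet known that the subring $\res\bigl(H^*_T(\tY_\bh)\bigr)$ is preserved by that action. Establishing this invariance is essentially the content of Lemma~\ref{n38} (together with Lemma~\ref{n37}), so you cannot assume it at the outset. If you instead lean on Lemma~\ref{n37} to supply the $S_n$-action on $H^*_T(\tY_\bh)$, then the averaging does produce an $H^*_T$-linear, $S_n$-equivariant splitting $\Phi$ of $s_0^*$, but you would still need to verify that $\alpha\otimes\gamma^k\mapsto\Phi(\alpha)\cup\tilde\gamma^k$ is an isomorphism and that it preserves the $\mathfrak m$-submodule; this is not immediate because the relevant Leray--Hirsch decomposition for the $T$-equivariant bundle $(\tY_\bh)_T\to(\PP^r)_T$ naturally presents $H^*_T(\tY_\bh)$ as a module over $H^*_T(\PP^r)$, not over $H^*_T\otimes H^*(\PP^r)$, so some reconciliation of module structures is needed.

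Second, the fixed-point computation you write down, $\pi^*(\beta)|_{(v,\sigma_i)}=\beta|_{v\tilde\sigma_i}$, is incorrect: under the identification $\tY_\bh^T\cong Y_{h_+}^T\times(\PP^r)^T$ given by $(\pi,\pr_2)$ in \eqref{26}, the map $\pi$ sends $(v,\sigma_i)$ to $v$, so $\pi^*(\beta)|_{(v,\sigma_i)}=\beta|_v$ with no $\tilde\sigma_i$-shift. You may be conflating $\pi$ with the identification \eqref{51} of the fiber $\pr_2^{-1}(\sigma_k)$ with $Y_h$ (which \emph{does} involve right multiplication by $(j,j+k)$); those are different maps.

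The paper's actual argument is different and more concrete. It filters $\tY_\bh$ by the sub-bundles $\tY_k=\pr_2^{-1}(H_k)$ over the coordinate planes $H_k\cong\PP^k\subset\PP^r$, producing $Y_h\cong\tY_0\subset\cdots\subset\tY_r=\tY_\bh$. Each inclusion $\tY_{k-1}\subset\tY_k$ yields a split Gysin sequence \eqref{n41} (split because $\jmath_k^*(\jmath_k)_*$ is multiplication by a non-zero-divisor $\alpha_k=e^T(N_{H_{k-1}/H_k})$). Summing over $k$ gives an explicit decomposition $H^*_T(\tY_\bh)\cong\bigoplus_{k=0}^r H^{*-2k}_T(\pr_2^{-1}(\sigma_k))$, and each fiber $\pr_2^{-1}(\sigma_k)$ is identified with $Y_h$ via \eqref{51}. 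The $S_n$-equivariance of $(\jmath_k)_*$ and of the projections $\rho_k$ is then checked by restricting to fixed points and noting that $\alpha_k$ does not depend on $v\in S_n$. This filtration produces the decomposition together with its $S_n$-equivariance in one constructive stroke, avoiding both the need for an a priori $S_n$-action on $H^*_T(\tY_\bh)$ and the reliance on an abstractly chosen splitting.
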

\begin{proof}
Suppose $\bh$ is of type (1). 
	Let $H_k\cong \PP^{k}$ be the coordinate plane in $\PP^r$ spanned by the coordinate points $\sigma_0,\cdots, \sigma_k$. This induces a $T$-equivariant filtration
	\[Y_h\cong \tY_0 \subset \cdots \subset \tY_r=\tY_\bh\]  	
	of $\tY_\bh$, where $\tY_k$ are given by
	\[\tY_k:=\pr_2^{-1}(H_k)\cong\{(y,[v])\in Y_{h_+}\times H_k:f_\bh(y)\in \C v\}.\]
	Equivalently, $\tY_k$ is the (smooth) intersection of $\tY_\bh$ and $Y_{h_+}\times H_k$ in $Y_{h_+}\times \PP^r$.
	Let $\jmath_k:\tY_{k-1}\subset \tY_{k}$ denote the inclusion. Associated to this filtration, there is a Gysin sequence
\beq\label{n41}
\xymatrix{0 \ar[r]& H^{*-2}_{T}(\tY_{k-1})\ar[r]^-{~(\jmath_k)_*~} & H^*_T(\tY_{k})\ar[r]\ar[rd]_-{\rho_k}& H^*_T(\tY_k-\tY_{k-1}) \ar[r]\ar[d]^{\cong}  &0 \\ &&& H^*_T(\pr_2^{-1}(\sigma_k))&}\eeq
	for each $1\leq k\leq r$, which is split. Indeed, $\jmath_k^*\circ(\jmath_k)_*$ is equal to the multiplication by $\a_k:=e^T(N_{H_{k-1}/H_k})=(\pm(t_{j+k}-t_{j+i}))_{(v,\sigma_i)\in \tY_{k-1}^T}$ which is not a zero divisor. 
	
	The short exact sequence \eqref{n41} gives us the isomorphism 
	\[H^*_T(\tY_{\bh})\cong \bigoplus_{i=0}^r H^{*-2k}_T(\pr_2^{-1}(\sigma_k))\]
	as graded vector spaces. 
	Furthermore, there is an explicit isomorphism
	\beq \label{51} Y_h\cong \tY_0 \xrightarrow{~\cong ~}\pr_2^{-1}(\sigma_k)=\{y\in Y_{h_+}:f_\bh(y)\in \C e_k\}\eeq 
	which sends $Tg$ to $Tg\cdot (j,j+k)$ where $(j,j+k)$ denotes the permutation matrix in $U(n)$ associated to the transposition $(j,j+k)\in S_n$.
	Hence, it remains to show that $(\jmath_k)_*$ and $\rho_k$ are $S_n$-equivariant.  
	To see this, observe that \eqref{n41} fits into the commutative diagram
	\[\xymatrix{0 \ar[r]& H^{*-2}_{T}(\tY_{k-1})\ar[r]^{(\jmath_k)_*} \ar[d]_-{\a_k\circ \res }  & H^*_T(\tY_{k})\ar[r]^-{\rho_k}\ar[d]_-{\res}& H^*_T(\pr_2^{-1}(\sigma_k)) \ar[r]\ar[d]_-{\res} &0 \\ 0\ar[r]&H^{*}_{T}(\tY_{k-1}^T)\ar[r]& H^*_T(\tY_{k}^T)\ar[r]^-{\rho_k^T}& H^*_T(\pr_2^{-1}(\sigma_k)^T)\ar[r]&0}\]
	of short exact sequences, where the vertical maps are all injective and the bottom row is induced by $\tY_{k}^T=\tY_{k-1}^T \sqcup \pr_2^{-1}(\sigma_k)^T $. 
	Therefore, $(\jmath_k)_*$ is $S_n$-equivariant since $\a_k$ is $S_n$-equivariant and $\tY_{k-1}^T$ is $S_n$-invariant in $\tY_k^T$. 
		
	Moreover, $\rho_k$ is $S_n$-equivariant since $\rho_k^T$ is the projection map induced by the inclusion $S_n\times \{\sigma_k\} \hookrightarrow  S_n\times \{\sigma_i\}_{i\leq k}$, which is $S_n$-equivariant.	
	On the other hand, the isomorphism \eqref{51} is $T$-equivariant with respect to the usual $T$-action on $Y_h$ composed with the interchange of $t_j$ and $t_{j+k}$. 
	This completes the proof for $\bh$ of type (1).
	
	For $\bh$ of type (2), consider the coordinate planes $H_k\cong \PP^{k}$ in $\PP^r$ spanned by the coordinate points $\sigma_r,\cdots, \sigma_{r-k+1}$ and the induced $T$-equivariant filtration $Y_h\cong \tY_0 \subset \cdots \subset \tY_r=\tY_\bh$ 
	of $\tY_\bh$ given by $\tY_k=\pr_2^{-1}(H_k)$. Then the proof is parallel to that for $\bh$ of type (1) and we omit the detail.	
\end{proof}

\bigskip

\section{Unicellular LLT polynomials and twin manifolds}\label{S4}
		
Let $\bh=(h_-,h,h_+)$ be a triple of Hessenberg functions in Definition \ref{43}.
Let $\tY_\bh$ denote the roof manifold which is a $Y_h$-fiber bundle over $\PP^r$ by Proposition \ref{38} and also the blowup of $Y_{h_+}$ along $Y_{h_-}$ by Proposition \ref{39}. 
 
In this section, we will apply the geometry of the twin manifolds associated to $\bh$ to compare the cohomology of the twin manifolds. 
In particular, we will establish the modular law \eqref{5}
for a modular triple $\bh$ (when $r=1$). 

\medskip

\subsection{The modular law}

By Proposition \ref{n35}, we can consider the Frobenius characteristic of the cohomology of the roof manifold $\tY_\bh$. 
\begin{definition}\label{28}
	Following Definition~\ref{30}, we let
	\[\cP(\bh):=\sum_{k\geq 0}\ch(H^{2k}(\tY_\bh))q^k \in \Lambda[q].\]
\end{definition}

For every triple $\bh=(h_-,h,h_+)$ in Definition~\ref{43}, the following is immediate from \eqref{n33}, \eqref{17} and Proposition \ref{n35}.
\begin{proposition}\label{27}
	\beq \label{31} \cP(\bh)=\cP(h_+)+q[r]_q\cP(h_-).\eeq
	\beq \label{32}\cP(\bh)=[r+1]_q\cP(h).\eeq
\end{proposition}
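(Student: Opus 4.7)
The plan is to obtain both identities as direct consequences of the two $S_n$-equivariant decompositions of $H^*(\tY_\bh)$ established earlier, by applying the graded Frobenius characteristic $\ch_q$ of Definition~\ref{30}. The key observation is that $\ch_q$ converts direct sums into sums, degree shifts $H^{*-2i}$ into multiplications by $q^i$, and (for trivial $S_n$-representations) tensor products with $H^*(\PP^r)$ into multiplications by the Poincar\'e polynomial $1+q+\cdots+q^r=[r+1]_q$.

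For \eqref{32}, I would invoke \eqref{n33}, which gives $H^*(\tY_\bh)\cong H^*(Y_h)\otimes H^*(\PP^r)$. By Proposition~\ref{n35} (applied via Lemma~\ref{n38}), this is $S_n$-equivariant with $S_n$ acting trivially on the $\PP^r$-factor. Since $H^{2i}(\PP^r)$ is a trivial one-dimensional representation for $0\leq i\leq r$ and vanishes otherwise, taking Frobenius characteristics yields
\[
\cP(\bh)=\cP(h)\cdot\sum_{i=0}^{r}q^i=[r+1]_q\,\cP(h),
\]
which is \eqref{32}.

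For \eqref{31}, I would invoke the blowup isomorphism \eqref{17}
\[
H^*(\tY_\bh)\cong H^*(Y_{h_+})\oplus \bigoplus_{i=1}^{r}H^{*-2i}(Y_{h_-}),
\]
which is $S_n$-equivariant by Proposition~\ref{n35}. Applying $\ch_q$ to both sides, the summand $H^{*-2i}(Y_{h_-})$ contributes $q^i\,\cP(h_-)$ because the cohomological degree is shifted by $2i$. Summing,
\[
\cP(\bh)=\cP(h_+)+\sum_{i=1}^{r}q^i\,\cP(h_-)=\cP(h_+)+q[r]_q\,\cP(h_-),
\]
which is \eqref{31}.

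There is essentially no obstacle here: all the geometric and representation-theoretic content has already been packaged into \eqref{n33}, \eqref{17}, and Proposition~\ref{n35}. The only thing one should be careful about is confirming the triviality of the $S_n$-action on $H^*(\PP^r)$ in \eqref{n33} (which is exactly what Lemma~\ref{n38} ensures) so that $\ch_q(H^*(\PP^r))$ may simply be substituted by the scalar $[r+1]_q\in\mathbb{Z}[q]$, and that the degree-shift convention used in Definition~\ref{30} matches the conventions in \eqref{n33} and \eqref{17}.
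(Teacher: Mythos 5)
Your proposal is correct and matches the paper's argument: the paper itself states that Proposition~\ref{27} is immediate from \eqref{n33}, \eqref{17}, and Proposition~\ref{n35}, which is precisely what you invoke. Your expanded explanation of how $\ch_q$ interacts with direct sums, degree shifts, and the trivial $S_n$-action on $H^*(\PP^r)$ is exactly the bookkeeping that the paper leaves implicit.
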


Combining \eqref{31} and \eqref{32}, we have the following. 

\begin{theorem} \label{2}
	Let $\bh=(h_-,h,h_+)$ be a triple in Definition~\ref{43}. Then, 
	\[[r+1]_q \cP(h)=\cP(h_+)+q[r]_q \cP(h_-).\]
	In particular, the modular law
	\beq \label{53} [2]_q\cP(h)=\cP(h_+)+q\cP(h_-) \eeq
	holds for a modular triple $\bh$ (when $r=1$).
\end{theorem}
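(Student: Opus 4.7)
The plan is to derive Theorem~\ref{2} as an immediate algebraic consequence of the two identities collected in Proposition~\ref{27}, which package the two distinct geometric structures on $\tY_\bh$ into two formulas for $\cP(\bh)$. The first formula, $\cP(\bh) = \cP(h_+) + q[r]_q\,\cP(h_-)$, arises from the blowup presentation $\pi:\tY_\bh \to Y_{h_+}$ with center $Y_{h_-}$ and exceptional divisor $E_\bh \cong Y_{h_-}\times\PP^r$ (Propositions~\ref{39} and~\ref{40}). The $S_n$-equivariant blowup isomorphism~\eqref{17}, whose equivariance is furnished by Proposition~\ref{n35}, decomposes $H^*(\tY_\bh)$ into $H^*(Y_{h_+})$ together with $r$ degree-shifted copies of $H^*(Y_{h_-})$, producing the factor $q[r]_q = q + q^2 + \cdots + q^r$ multiplying $\cP(h_-)$ after applying $\ch_q$.

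The second formula, $\cP(\bh) = [r+1]_q\,\cP(h)$, comes from the fiber bundle $\pr_2:\tY_\bh \to \PP^r$ with fiber $Y_h$ (Proposition~\ref{38}). Because $H^*(Y_h)$ vanishes in odd degree (Theorem~\ref{34}~(1)), the Serre spectral sequence of $\pr_2$ degenerates and yields a Leray--Hirsch isomorphism $H^*(\tY_\bh) \cong H^*(Y_h)\otimes H^*(\PP^r)$; Proposition~\ref{n35} upgrades this to an $S_n$-equivariant isomorphism in which $S_n$ acts trivially on the $\PP^r$ factor. Applying $\ch_q$ then multiplies $\cP(h)$ by the Poincar\'e polynomial $[r+1]_q$ of $\PP^r$.

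Equating these two expressions for $\cP(\bh)$ yields the first assertion $[r+1]_q\,\cP(h) = \cP(h_+) + q[r]_q\,\cP(h_-)$. Specializing to a modular triple forces $r=1$, so $[2]_q = 1+q$ and $[1]_q = 1$, and the identity collapses to the modular law $[2]_q\,\cP(h) = \cP(h_+) + q\,\cP(h_-)$. The final step is essentially a one-line comparison; the real obstacles lie upstream in Propositions~\ref{38},~\ref{39}, and especially~\ref{n35}, where the compatibility of the two $S_n$-actions on $H^*(\tY_\bh)$ (one from the blowup decomposition, the other from the Leray--Hirsch decomposition) is verified by restriction to the $T$-fixed locus $\tY_\bh^T \cong S_n \times \{\sigma_0,\cdots,\sigma_r\}$ via GKM theory.
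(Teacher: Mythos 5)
Your proposal matches the paper's argument essentially verbatim: it derives Theorem~\ref{2} by equating the two expressions for $\cP(\bh)$ from Proposition~\ref{27} (one from the blowup formula~\eqref{17}, one from the fibration~\eqref{n33}), with the $S_n$-equivariance supplied by Proposition~\ref{n35}, and then specializes to $r=1$. This is exactly how the paper proceeds, so no further comment is needed.
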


Moreover, we have canonical $S_n$-equivariant isomorphisms
\beq \label{n43} H^*(Y_h)\oplus H^{*-2}(Y_h)\cong H^*(\tY_\bh)\cong 
H^*(Y_{h_+})\oplus H^{*-2}(Y_{h_-})\eeq
for a modular triple $\bh$.

\subsection{Unicellular LLT and twins} 
The chromatic quasisymmetric functions and the representations of $S_n$ on the cohomology of Hessenberg varieties are related by the Shareshian-Wachs conjecture \cite{SW}, proved in \cite{BC,GP2},  
\beq \label{57}\cF(h):=\sum_{k\geq 0}\ch(H^{2k}(X_h))q^k=\w\,\csf_h(q)\eeq
where the $S_n$-action on $H^*(X_h)$ is given by the dot action \eqref{19} and
$\w$ denotes the involution of $\Lambda$ which interchanges each Schur function with its transpose.

An analogous connection between unicellular LLT polynomials and the representations of $S_n$ on the cohomology of twin manifolds was discovered by Masuda-Sato and Precup-Sommers. 
\begin{theorem} \label{48} \cite[Proposition 3.2.1]{MS} \cite[Corollary 7.9 (2)]{PS}
	$$\cP(h)=\LLT_h(q)$$ for every Hessenberg function $h$.
\end{theorem}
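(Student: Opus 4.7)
The plan is to deduce Theorem~\ref{48} by verifying that the function $h \mapsto \cP(h)$ satisfies the three conditions in the characterization theorem of Abreu--Nigro (Theorem~\ref{46}), which uniquely determines $\LLT_{(-)}$. Since the paper has already established all three conditions separately, the proof is essentially a one-line assembly of earlier results.

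Concretely, I would argue as follows. First, condition (1) holds for $\cP$ by Proposition~\ref{n16}, which was extracted from Example~\ref{33}: when $h(i)=n$ for all $i$, $Y_h$ is the isospectral manifold $Y \cong T\backslash U(n)$, and the isomorphism $H^*_T(Y) \cong H^*(X)\otimes \mathbb{Q}[t_1,\dots,t_n]$ (with trivial dagger action on $H^*(X)$ and the permutation action on the polynomial ring) together with Lemma~\ref{n21} forces $\cP(h) = \cK_n$. Second, condition (2) holds for $\cP$ by Proposition~\ref{58}: when $h(j)=j$, the block-diagonal decomposition \eqref{49} of $Y_h$ realizes $H^*(Y_h)$ as the $S_n$-induction of the $S_j\times S_{n-j}$-representation $H^*(Y_{h'})\otimes H^*(Y_{h''})$, and multiplicativity of $\mathrm{ch}$ under induced tensor products gives $\cP(h)=\cP(h')\cP(h'')$.

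Third, and most crucially, condition (3) --- the modular law --- holds for $\cP$ by Theorem~\ref{2} specialized to $r=1$. This is the main geometric content of the paper: for a modular triple $\bh=(h_-,h,h_+)$, the roof manifold $\tY_\bh$ fits into the diagram of Proposition~\ref{38} as a $Y_h$-bundle over $\PP^1$ and simultaneously, by Proposition~\ref{39}, as the blowup of $Y_{h_+}$ along the codimension-$2$ submanifold $Y_{h_-}$; combined with the $S_n$-equivariance assertion of Proposition~\ref{n35}, this yields the two $S_n$-equivariant decompositions in \eqref{n43}, whose Frobenius characteristics give $[2]_q\cP(h)=\cP(h_+)+q\cP(h_-)$.

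With all three conditions verified, Theorem~\ref{46} forces $\cP(h)=\LLT_h(q)$ for every Hessenberg function $h$, completing the proof. The only potential subtlety --- and the step I would treat most carefully --- is to make sure the $S_n$-action used in Definition~\ref{30} matches the one implicit in Theorem~\ref{46}; since the dagger action on $H^*(Y_h)$ is defined in \eqref{1} so that the identifications in Propositions~\ref{58} and \ref{n35} are automatically $S_n$-equivariant, no further check is required. Thus the entire argument reduces to citing Propositions~\ref{n16}, \ref{58} and Theorem~\ref{2}, and invoking the uniqueness statement of Theorem~\ref{46}.
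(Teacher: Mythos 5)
Your proof is correct and follows exactly the paper's argument: verify the three conditions of the Abreu--Nigro characterization (Theorem~\ref{46}) by citing Proposition~\ref{n16}, Proposition~\ref{58}, and Theorem~\ref{2} respectively, then invoke uniqueness. The extra detail you give about the geometric content behind each ingredient is accurate but not part of the proof of Theorem~\ref{48} itself.
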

\begin{proof}
	By the characterization of $\LLT_h$ in Theorem~\ref{46}, it suffices to show that $\cP$ satisfies the three conditions (1), (2) and (3). (1) was proved in Proposition \ref{n16} and (2) was proved in Proposition \ref{58}. Finally, Theorem \ref{2} proves (3). 
\end{proof}

\begin{remark}\label{56}
	Masuda-Sato's proof of Theorem~\ref{48} in \cite{MS} makes an essential use of the Shareshian-Wachs conjecture while our proof is based on the geometry of twin manifolds, independent of the SW conjecture. 
	Their proof is a direct consequence of the following three major ingredients:
	\begin{enumerate}
		\item[(i)] the Shareshian-Wachs conjecture  \eqref{57},
		\item[(ii)] the Carlsson-Mellit relation (cf.~\cite[Proposition 3.5]{CM})
		\[{(q-1)^n}\csf_h(q)=\LLT_h[{(q-1)}X;q]\]
		where $X=x_1+x_2+\cdots$ with variables $x_i$ of $\Lambda$ and $[~]$ denotes the plethystic substitution, and 
		\item[(iii)] the parallel plethystic relation (cf.~\cite[Proposition 3.0.2]{MS})
		\[{(1-q)^n }\cF(h)=\cP(h)\left[(1-q)X;q\right]\]
		obtained by applying the formula in \cite[Proposition 3.3.1]{Hai} to the isomorphism   
\eqref{n49}. 		
	\end{enumerate}
	Pictorially, it can be summarized in the diagram
	\beq\label{n60}\xymatrix{\text{Hessenberg varieties}\ar@{-}[rr]^-{\textrm{(i)}}\ar@{-}[d]_-{\textrm{(iii)}} && \text{chromatic quasisymmetric functions}\ar@{-}[d]^-{\textrm{(ii)}}\\ 
	\text{twin manifolds} \ar@{<->}[rr]^-{\text{Theorem~\ref{48}}} && \text{unicellular LLT polynomials}.}\eeq
	Note that  our proof of Theorem \ref{48} is direct without relying on (i), (ii) or (iii).	
	
	On the other hand, in \cite{PS}, Precup-Sommers studied a relation between the coefficients of $\cP(h)$ in the Schur basis expansion, which are polynomials in $q$, and the cohomology of \emph{nilpotent} Hessenberg varieties (cf.~\cite[\S1]{PS}). Based on this, they proved the modular law \eqref{53}, by establishing the modular law for the nilpotent Hessenberg varieties. 
	However, their work does not address the geometry of twin manifolds.
\end{remark}

\begin{remark}
	Our proof of Theorem \ref{48} together with (ii) and (iii) in \eqref{n60} provide us with a new proof of the Shareshian-Wachs conjecture \eqref{57}.
\end{remark}

\end{document}